\journalname{Noname}
\renewcommand{\leq}{\leqslant}
\renewcommand{\geq}{\geqslant}
\newcommand{\R}{\mathbb{R}}
\newcommand{\Q}{\mathbb{Q}}
\newcommand{\F}{\mathcal{F}}
\newcommand{\A}{\mathcal{A}}
\newcommand{\B}{\mathcal{B}}
\newcommand{\N}{\mathbb{N}}
\renewcommand{\P}{\mathbf{P}}
\newcommand{\E}{\mathbf{E}}
\newcommand{\Z}{\mathbb{Z}}
\newcommand{\T}{\mathcal{T}}
\newcommand{\zero}{0}
\newcommand{\TV}[1]{\left\Vert{#1}\right\Vert_{\mathrm{TV}}}
\newcommand{\textdefn}[1]{\emph{#1}}
\DeclareMathOperator{\supp}{supp}
\DeclareMathOperator{\RW}{RW}
\DeclareMathOperator{\len}{len}
\begin{document}

\title{Exact Coupling of Random Walks on Polish Groups}

\author{James~T.~Murphy~III}
\institute{James~T.~Murphy~III \at 
The University of Texas at Austin \\
\email{james@intfxdx.com}
}
\date{Received: 21 Jun 2017 / Accepted: 30 Aug 2018}
\journalname{Journal of Theoretical Probability}

\maketitle

\begin{abstract}
    Exact coupling of random walks is studied.
    Conditions for admitting a successful exact coupling are given that are necessary and in the
    Abelian case also sufficient.
    In the Abelian case,
    it is shown that a random walk $S$ with step-length distribution $\mu$
    started at $0$ admits a successful exact coupling
    with a version $S^x$ started at $x$ if and only if there is $n\geq 1$ with $\mu^{n} \wedge \mu^{n}(x+\cdot) \neq \zero$. 
	Moreover, when a successful exact coupling exists,
	the total variation distance between $S_n$ and $S^x_n$ is
	determined to be $O(n^{-1/2})$ if $x$ has infinite order, or
	$O(\rho^n)$ for some $\rho \in (0,1)$ if $x$ has finite order.
    In particular, this paper solves a problem posed by H.~Thorisson on successful exact coupling of random
    walks on $\R$.
    It is also noted that the set of such $x$ for which a successful exact
    coupling can be constructed is a Borel measurable group.
    Lastly, the weaker notion of possible exact coupling and its relationship
    to successful exact coupling are studied.
    \keywords{Random walk \and Successful exact coupling \and Polish group}
    \subclass{60G50 \and 60F99 \and 28C10}
\end{abstract}

\section*{Acknowledgments}
This work was supported by a grant of the Simons Foundation (\#197982 to The University of Texas at Austin).
The author also thanks the anonymous referees for their helpful suggestions.

\section{Introduction}
Let $G$ be a Polish group with identity $e$.
Recall that a Polish group is a group equipped with a topology under which
multiplication and inversion are continuous operations, and such that the topology is
separable and completely metrizable.
The most poignant examples to keep in mind throughout are $\R^d$ and $\Z^d$.
If $G$ is Abelian, additive notation is used instead and the identity is denoted $0$.
Fix, for the remainder of the document, a Borel probability measure $\mu$ on
$G$.
For each $x \in G$,
let $\RW(x,\mu)$ be the law of a (right) random walk on $G$ started at $x$ and with 
\textdefn{step-length distribution} $\mu$. 
That is, $\RW(x,\mu)$ is the law on the product space $G^\N$ of a
process $S^x = \{S^x_n\}_{n=0}^\infty$ such that
\begin{equation}
    S^x_n = xX_1 X_2\dotsm X_n, \qquad 0 \leq n < \infty,
\end{equation}
where the \textdefn{step-lengths} $\{X_i\}_{i=1}^\infty$ are i.i.d.\ random elements in $G$ with distribution $\mu$.
Such a process $S^x \sim \RW(x,\mu)$ is called an \textdefn{$(x,\mu)$-random walk.}

One may be interested in the long-term effects of the choice of the initial
location $x \in G$ of a random walk.
After a long time, can one distinguish an $(x,\mu)$-random walk from a
$(y,\mu)$-random walk in the sense of total variation?
That is, for $(x,\mu)$- and $(y,\mu)$-random walks $S^x$
and $S^y$, one would like to know whether
\begin{align}\label{eq:TVconvergence}
    \TV{\P(S^x_n \in \cdot) - \P(S^y_n \in \cdot)} \to 0,\qquad n \to \infty,
\end{align}
where $\TV{\nu} = \sup_{B}\nu(B) - \inf_{B}\nu(B)$ denotes the
total variation of a finite signed measure $\nu$.
Note that for probability measures $\nu_1$ and $\nu_2$, one also has
\begin{equation}
	\TV{\nu_1-\nu_2} = 2 \sup_{B}|\nu_1(B) - \nu_2(B)|.
\end{equation}
An equivalent formulation of \eqref{eq:TVconvergence}
may be expressed in terms of successful exact
couplings.

An \textdefn{exact coupling} of $\RW(x,\mu)$ and $\RW(y,\mu)$
is a triple $(S^x, S^y, T)$ defined on a 
probability space $(\Omega,\F,\P)$ in such a way that $S^x\sim \RW(x,\mu)$, $S^y
\sim \RW(y,\mu)$, and $T$ is a random time,
called a \textdefn{coupling time},
such that
\begin{equation}
    S^x_n = S^y_n,\qquad n \geq T.
\end{equation}
If $T$ is a.s.\ finite, the exact coupling is called \textdefn{successful}.
If $\tilde S^x\sim \RW(x,\mu)$ and $\tilde S^y\sim \RW(y,\mu)$ are defined on
possibly different spaces, one also calls
$(S^x, S^y, T)$ an exact coupling of $\tilde S^x$ and $\tilde S^y$.
The condition \eqref{eq:TVconvergence} is
equivalent to the statement that $\RW(x,\mu)$ and $\RW(y,\mu)$ admit a successful
exact coupling.
See Theorem 9.4 in Section 9.5 of \cite{thorisson2000coupling} for the equivalence of these
statements.
In essence, successful exact coupling may be achieved if and only if the
initial condition is uniformly forgotten as
time progresses.
Moreover, the tail probabilities of a coupling time control the speed at
which the total variation distance between the two random walks decays.
Indeed, if $(S^x, S^y, T)$ is an exact coupling as above, one has for any Borel set $B
\subseteq G$,
\begin{align*}
	\left|\P(S^x_n \in B) - \P(S^y_n \in B)\right|
	= \left| \E[1_{S^x_n \in B} - 1_{S^y_n \in B}] \right|
	\leq \P(T > n),\qquad n \geq 0.
\end{align*}
Multiplying by $2$ and taking the supremum over all Borel $B$, one finds
\begin{align}\label{eq:couplinginequality}
	\TV{\P(S^x_n \in \cdot) - \P(S^y_n \in \cdot)} \leq 2\P(T > n), \qquad n
	\geq 0.
\end{align}

This paper investigates under what conditions successful exact couplings may be
constructed.
When successful exact coupling can be achieved, the
constructed coupling
time is analyzed to give bounds on the rate at which total variation distance
decays.
Note that if $S^x$ is an $(x,\mu)$-random walk and $y \in G$, then $y S^x$ is
a $(yx, \mu)$-random walk. 
Hence $\RW(x,\mu)$ and $\RW(y,\mu)$ admit a successful exact coupling if and only
if
$\RW(e,\mu)$ and $\RW(y^{-1}x,\mu)$ admit a successful exact coupling.
It therefore suffices to study only the case when one of the initial locations
is the identity.
That is, for what initial positions $x$ do $\RW(e,\mu)$ and $\RW(x,\mu)$ admit
a successful exact coupling?
\begin{definition}
	Define the \textdefn{successful exact coupling set} $G_s$ to be the subset of
	all $x \in G$ such that there exists a successful exact coupling of
	$\RW(e,\mu)$ and $\RW(x,\mu)$.
\end{definition}
The primary question is then to determine what is the set $G_s$.
This question was posed in \cite{thorisson2011open} for $G:=\R$, and in that case
the following two special cases were known as early as 1965, cf.\
\cite{stam1966shifting,herrmann1965glattungseigenschaften}, though 
references using more modern notation are cited here.
In the following, recall that when $G$ admits a (left-invariant) Haar measure
$\lambda$ (e.g.\ Lebesgue measure on $\R^d$ or a counting measure on $\Z^d$),
then 
$\mu$ is called \textdefn{spread out} if for some $n \geq 1$ one has $\mu^n \geq \int_\cdot f\,d\lambda$
for some Borel $f\geq 0$ not $\lambda$-a.e.\ zero, where for a Borel measure $\nu$
on $G$, $\nu^n$ denotes the $n$-fold convolution of $\nu$ with itself.

\begin{theorem}\label{thm:spreadoutcase}\cite{thorisson2000coupling}
	Let $G:=\R$.
	Then $G_s = G$ if and only if the step-length distribution
    $\mu$ is spread out.
\end{theorem}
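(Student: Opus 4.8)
The plan is to reduce everything to the Abelian overlap characterization recalled in the abstract, namely that $x \in G_s$ if and only if $\mu^n \wedge \mu^n(x+\cdot) \neq 0$ for some $n \geq 1$. Writing $\nu_x$ for the translate $B \mapsto \nu(x+B)$, so that $\mu^n(x+\cdot) = (\mu^n)_x$, one then only has to establish the following dichotomy for Lebesgue measure $\lambda$ on $\R$: if $\mu$ is spread out, then for every $x$ some convolution power satisfies $\mu^m \wedge (\mu^m)_x \neq 0$, whereas if $\mu$ is not spread out, then the set of such good $x$ is Lebesgue-null and hence a proper subset of $\R$. Since $\mu$ fails to be spread out exactly when $\mu^n \perp \lambda$ for every $n$, the two directions of the theorem line up with these two cases.

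For the direction that spread-outness implies $G_s = \R$, I would fix $n$ with $\mu^n \geq f\lambda$ for some $f \geq 0$ having $\int f\,d\lambda > 0$; truncating, one may assume $f$ bounded, hence $f \in L^2$. Then $\mu^{2n} \geq (f*f)\lambda$, where $f*f$ is continuous with $\int f*f\,d\lambda = (\int f\,d\lambda)^2 > 0$, so $f*f$ is strictly positive on some open interval. Iterating the convolution gives $\mu^{2^{k+1}n} \geq g_k\lambda$ with $g_k$ continuous and strictly positive on an interval $I$ whose length is $2^k|I_0| \to \infty$. Given any $x \in \R$, I would choose $k$ so large that $I$ has length exceeding $|x|$; then $I \cap (I-x) \neq \emptyset$, and the absolutely continuous parts of $\mu^m$ and $(\mu^m)_x$ (for $m = 2^{k+1}n$) dominate $g_k$ and $g_k(x+\cdot)$, which are simultaneously positive on $I \cap (I-x)$. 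Hence $\mu^m \wedge (\mu^m)_x \neq 0$, so $x \in G_s$; as $x$ was arbitrary, $G_s = \R$.

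For the converse I would argue by contraposition: assuming $\mu$ is not spread out, every $\nu := \mu^n$ is singular with respect to $\lambda$ and is carried by a Borel set $N = N_n$ with $\lambda(N) = 0$. Since $\nu \wedge \nu_x \leq \nu$ and $\nu \wedge \nu_x \leq \nu_x$, this measure is carried by $N \cap (N-x)$, so $\nu \wedge \nu_x \neq 0$ forces $\nu(N \cap (N-x)) > 0$. But by Tonelli,
\begin{equation}
  \int_{\R} \nu\big(N \cap (N-x)\big)\,dx = \int_{\R} 1_N(y)\Big(\int_{\R} 1_N(y+x)\,dx\Big)d\nu(y) = \lambda(N)\,\nu(\R) = 0,
\end{equation}
so $\nu(N \cap (N-x)) = 0$ for Lebesgue-a.e.\ $x$. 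Thus each set $B_n := \{x : \mu^n \wedge (\mu^n)_x \neq 0\}$ is Lebesgue-null, and by the characterization $G_s = \bigcup_n B_n$ is null as well; in particular $G_s \neq \R$. Combining the two cases yields the stated equivalence.

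The conceptual engine is the overlap characterization, so granting that, the only genuine analytic content sits in the spread-out case: a single convolution power need not overlap all of its translates (a uniform density, for instance, overlaps only nearby translates), and the point I expect to be most delicate is verifying that repeated convolution of a nontrivial absolutely continuous component yields continuous densities supported on intervals of unbounded length, since this is precisely what allows arbitrarily large $x$ to be reached. The converse direction, by contrast, should be immediate once the carrier $N$ is fixed, with the Tonelli computation being the whole story.
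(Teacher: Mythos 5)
Your proposal is correct, but the forward direction takes a genuinely different route from the paper. The paper (in Corollary~\ref{cor:connectedspreadoutcase}) also truncates $f$ and convolves once to get a continuous density $f*f>\epsilon$ on an open set $U$, but it then only concludes that a small symmetric neighborhood $V$ of the identity lies in $G_s$, and finishes by invoking the fact that $G_s$ is a subgroup (Corollary~\ref{cor:couplinggroup}, itself resting on the transfer-theorem construction of Proposition~\ref{prop:singlecouplingspace}): a subgroup with nonempty interior is clopen, hence all of $G$ by connectedness. You instead iterate the convolution so that the interval of strict positivity doubles at each stage, reaching any prescribed $x$ directly with an explicit convolution power $m=2^{k+1}n$; this keeps the whole argument inside elementary measure theory on $\R$, avoids the group structure of $G_s$ entirely, and even yields a quantitative $n$ as a function of $|x|$ — but it is tied to the linear order of $\R$, whereas the paper's topological argument works verbatim on any connected locally compact Abelian group. (The doubling step is sound: if $g\geq 0$ is positive on $(a,b)$ then $g*g$ is positive on $(2a,2b)$ without any continuity needed, and your $L^1\cap L^\infty\subseteq L^2$ bookkeeping for continuity of $f*f$ is fine.) For the converse, your Tonelli computation over a $\lambda$-null carrier $N$ of $\mu^n$ is essentially the same integral identity the paper uses, just organized contrapositively; it in fact proves the stronger statement $\lambda(G_s)=0$ when $\mu$ is not spread out, which the paper records separately as Corollary~\ref{cor:abelianhaarzeroone} via the Steinhaus theorem. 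One cosmetic point: completeness of Lebesgue measure is what lets you call each $B_n$ null, since you only exhibit it as a subset of a null set; Borel measurability of $B_n$ is proved in the paper's Corollary~\ref{cor:couplingmeasurability} but is not actually needed for your argument.
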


\begin{theorem}\label{thm:atomiccase}\cite{arnaldsson2010coupling}
    Let $G:=\R$ and
    suppose $\mu$ is purely atomic with $A$ denoting the set of atoms of $\mu$.
	Then $G_s$ is the subgroup generated by $A-A= \{a-a': a,a' \in A\}$.
\end{theorem}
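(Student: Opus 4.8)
The plan is to deduce this from the main characterization stated in the abstract, namely that for Abelian $G$ one has $x \in G_s$ if and only if there exists $n \geq 1$ with $\mu^n \wedge \mu^n(x+\cdot) \neq \zero$. Everything then reduces to understanding the wedge of two purely atomic measures and translating the resulting condition into the language of sumsets, after which a short group-theoretic identification finishes the argument.

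First I would record the atomic structure of the convolution powers. Writing $A_n := \{a_1 + \dotsb + a_n : a_1,\dotsc,a_n \in A\}$ for the $n$-fold sumset, the measure $\mu^n$ is again purely atomic, and its set of atoms is exactly $A_n$: each $s \in A_n$ admits at least one representation $s = a_1 + \dotsb + a_n$ with $a_i \in A$, so $\mu^n(\{s\}) \geq \prod_{i=1}^n \mu(\{a_i\}) > \zero$, while conversely $\mu^n$ is supported on $A_n$ since each step lies in $A$ almost surely. The translate $\mu^n(x+\cdot)$ is then purely atomic with atom set $A_n - x$. Since for purely atomic measures the infimum $\mu^n \wedge \mu^n(x+\cdot)$ assigns to each singleton the minimum of the two masses, it is nonzero precisely when the two measures share a common atom, i.e.\ when $A_n \cap (A_n - x) \neq \emptyset$. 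Unwinding this, a point $s$ lies in $A_n \cap (A_n - x)$ if and only if $s \in A_n$ and $s + x \in A_n$, which is exactly the statement $x \in A_n - A_n$. Combining with the characterization gives $G_s = \bigcup_{n \geq 1}(A_n - A_n)$.

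It then remains to identify this union with the subgroup $H$ generated by $A - A$, which I would do by proving the two inclusions. For $\bigcup_n (A_n - A_n) \subseteq H$, any element of $A_n - A_n$ has the form $(a_1 + \dotsb + a_n) - (b_1 + \dotsb + b_n) = \sum_{i=1}^n (a_i - b_i)$, a finite sum of elements of $A - A$, hence it lies in $H$. For the reverse inclusion, since $A - A$ is symmetric and contains $\zero$, every element of $H$ is a finite sum $\sum_{i=1}^k (a_i - b_i)$ with $a_i,b_i \in A$, and regrouping gives $(a_1 + \dotsb + a_k) - (b_1 + \dotsb + b_k) \in A_k - A_k$. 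Thus $H = \bigcup_n (A_n - A_n) = G_s$, as claimed.

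I expect this argument to be short precisely because the heavy lifting is carried by the abstract characterization of $G_s$; within the present statement the only points demanding care are verifying that the atom set of $\mu^n$ is genuinely all of $A_n$ rather than a proper subset (which is where positivity of the atomic masses enters) and confirming that the wedge of two purely atomic measures vanishes exactly when their supports are disjoint. Neither is a serious obstacle, so the real difficulty lies upstream, in whatever lemma establishes the wedge characterization of $G_s$ itself rather than in this corollary.
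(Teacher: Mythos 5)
Your proposal is correct and follows essentially the same route as the paper, which proves the general Abelian version of this statement in Corollary~\ref{cor:abeliandiscretecase}: identify the atom set of $\mu^n$ as the $n$-fold sumset, observe that the wedge $\mu^n \wedge \theta_x^{-1}\mu^n$ is nonzero exactly when $x \in nA - nA$, and use symmetry of $A-A$ to identify $\bigcup_n n(A-A)$ with the generated subgroup before invoking the wedge characterization of $G_s$ from Corollary~\ref{cor:abeliancase}. Your added care about why every point of $A_n$ is genuinely an atom and why the wedge of atomic measures detects a common atom fills in details the paper leaves implicit, but the argument is the same.
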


Given two Borel measures $\nu_1$ and $\nu_2$ on $G$, denote $\nu_1 \wedge \nu_2$ to be the largest measure smaller
than $\nu_1$ and $\nu_2$.
The zero measure is denoted $\zero$.
For $x \in G$, also define the shift $\theta_x\nu$ by $\theta_x\nu(B) := \nu(x^{-1}B)$
for each Borel $B \subseteq G$.
The interpretation of $\theta_x\nu$ is $\nu$ with all mass shifted (left-multiplied) by $x$,
and $\theta_x\nu$ satisfies 
$\int_G f(y)\,\theta_x\nu(dy) = \int_G f(xy)\,\nu(dy)$
for all Borel $f:G\to\R_{\geq 0}$.

The resolution to Thorisson's problem and generalizations of the previous
theorems may now be stated.
The proof is postponed and broken into several separate more general theorems appearing across multiple sections.

\begin{theorem}\label{thm:abelianproblemresolutionteaser}
    Suppose $G$ is Abelian. 
    Then the following hold:
    \begin{enumerate}
		\item[(a)]{} 
			$G_s = \{x \in G: \exists n \geq 1, \mu^n \wedge
			\theta_x^{-1}\mu^n \neq 0\}$.
		\item[(b)]{}
			For $x \in G_s$ and $n_0 \geq 1$ such that $\mu^{n_0} \wedge
			\theta_x^{-1}\mu^{n_0}\neq \zero$, there is $C=C(\mu,x,n_0) > 0$ such that for $S \sim
			\RW(0,\mu)$ and $S^x \sim \RW(x,\mu)$ under $\P$, one has
			\begin{itemize}
				\item if $x$ has infinite order,
                    \begin{align}
						\TV{\P(S_n \in \cdot) - \P(S^x_n \in \cdot)} \leq
					\frac{C}{\sqrt{n}},\qquad n \geq 1,
					\end{align}
				\item if $x$ has finite order, there is
					$\rho=\rho(\mu,x,n_0) \in (0,1)$
					such that 
					\begin{align}
						\TV{\P(S_n \in \cdot) - \P(S^x_n \in \cdot)} \leq C
					\rho^n \qquad n \geq 1.
					\end{align}
			\end{itemize}
        \item[(c)]{} Suppose $G$ is locally compact with Haar measure
		$\lambda$.
			If $G_s = G$, then $\mu$ is spread out.
            If $G$ is connected, the converse holds as well.
        \item[(d)]{} Suppose $\mu$ is purely atomic with $A$ denoting the set
			of atoms of $\mu$.
			Then $G_s$ is the subgroup generated by $A-A$.
		\item[(e)]{} $G_s$ is a Borel measurable subgroup of $G$.
    \end{enumerate}
\end{theorem}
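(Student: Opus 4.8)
The plan is to treat part~(a) as the core and to deduce the remaining parts from its characterization. For the necessity direction of~(a) I would use the maximal-coupling bound: in any coupling of $S_n\sim\mu^n$ and $S^x_n\sim\theta_x\mu^n$, the law of $S_n$ restricted to the event $\{S_n=S^x_n\}$ is dominated by both $\mu^n$ and $\theta_x\mu^n$, hence by $\mu^n\wedge\theta_x\mu^n$, so that $\P(S_n=S^x_n)\leq\|\mu^n\wedge\theta_x\mu^n\|$. If $x\in G_s$ there is a successful exact coupling with a.s.\ finite $T$, forcing $\P(S_n=S^x_n)\geq\P(T\leq n)>0$ for large $n$, whence $\mu^n\wedge\theta_x\mu^n\neq\zero$; rewriting through the mass-preserving bijection $\theta_{-x}$ gives the stated form involving $\theta_x^{-1}\mu^n$.

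The sufficiency direction, together with~(b), is the heart of the matter and the main obstacle. Given $n_0$ with $\nu\wedge\theta_x\nu\neq\zero$, where $\nu:=\mu^{n_0}$, I would construct a Mineka-type reflection coupling of the embedded walks $S_{kn_0}$ and $S^x_{kn_0}$ that pins the difference $D_k:=S^x_{kn_0}-S_{kn_0}$ to the cyclic subgroup $\langle x\rangle$ and renders it a lazy symmetric random walk with independent increments in $\{0,\pm x\}$. The workable decomposition is $\alpha_+:=\tfrac12(\nu\wedge\theta_{-x}\nu)$ and $\alpha_-:=\theta_x\alpha_+=\tfrac12(\nu\wedge\theta_x\nu)$, for which $\alpha_++\theta_x\alpha_+=\tfrac12[(\nu\wedge\theta_{-x}\nu)+(\nu\wedge\theta_x\nu)]\leq\nu$; so at each epoch one independently selects, with probability $p:=\|\alpha_+\|=\tfrac12\|\nu\wedge\theta_x\nu\|>0$ each, to set $X'=X+x$ or $X'=X-x$, drawing $X$ from the corresponding normalized piece, and otherwise synchronizes, which yields correct $\nu$-marginals in both coordinates while making the $D$-increments i.i.d.\ in $\{0,\pm x\}$. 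Filling in the $n_0$ intermediate $\mu$-steps by a Polish bridge (conditioning i.i.d.\ $\mu$-steps on their prescribed sum) realizes this at the level of the original walk, and after $D$ first reaches $\zero$ I switch to synchronous steps, so the coupling time is $T=n_0\tau$ with $\tau$ the hitting time of $\zero$ by the free lazy walk. Recurrence of one-dimensional walks then gives $T<\infty$ a.s., so $x\in G_s$; and classical hitting-time tails give~(b): on $\langle x\rangle\cong\Z$ (infinite order) $\P(\tau>k)=O(k^{-1/2})$, while on $\langle x\rangle\cong\Z/m\Z$ (finite order) the finite irreducible chain gives $\P(\tau>k)=O(\rho^k)$; with $\TV{\P(S_n\in\cdot)-\P(S^x_n\in\cdot)}\leq2\P(T>n)$ these produce the $O(n^{-1/2})$ and $O(\rho^n)$ bounds. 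The delicate points I expect to spend most effort on are verifying that stopping the reflection coupling at $T$ and synchronizing afterward preserves both marginal laws, and the bridge construction at scale $n_0$.

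For~(d) I would feed the atomic structure into~(a): $\mu^n$ is supported on the $n$-fold sumset $nA$ and $\theta_x\mu^n$ on $x+nA$, so $\mu^n\wedge\theta_x\mu^n\neq\zero$ iff $(nA)\cap(x+nA)\neq\emptyset$ iff $x\in nA-nA$; since $A-A$ is symmetric and contains $0$, $\bigcup_n(nA-nA)$ is exactly the set of finite sums drawn from $A-A$, that is $\langle A-A\rangle$. For the forward implication of~(c) I would argue contrapositively: if $\mu$ is not spread out then every $\sigma:=\mu^n$ is singular with respect to Haar measure $\lambda$, supported on a $\lambda$-null set $N$; a Fubini computation gives $\int_G\sigma(N+x)\,d\lambda(x)=\int_G\lambda(y-N)\,\sigma(dy)=0$ using translation- and inversion-invariance of $\lambda$ on an Abelian group, so $\sigma\wedge\theta_x\sigma=\zero$ for $\lambda$-a.e.\ $x$; hence each $\{x:\mu^n\wedge\theta_x\mu^n\neq\zero\}$ is $\lambda$-null and $G_s\neq G$. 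The connected converse uses spread-out-ness to manufacture a continuous nontrivial absolutely continuous component of some $\mu^n$ whose positivity set, under repeated convolution, exhausts $G$ (a neighborhood of the identity generates a connected group), after which $\mu^{kn}$ and $\theta_x\mu^{kn}$ carry overlapping absolutely continuous parts for every $x$.

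Finally, for~(e): closure under inverses is immediate from $\|\mu^n\wedge\theta_x\mu^n\|=\|\mu^n\wedge\theta_{-x}\mu^n\|$ (apply the mass-preserving $\theta_{-x}$); writing $d_n(a,b):=\TV{\P(S^a_n\in\cdot)-\P(S^b_n\in\cdot)}$, closure under addition follows from the cited equivalence between successful exact coupling and $d_n(0,\cdot)\to0$ together with $d_n(0,x+y)\leq d_n(0,x)+d_n(x,x+y)$ and translation-invariance $d_n(x,x+y)=d_n(0,y)$, both right-hand terms vanishing. Borel measurability follows by writing $G_s=\bigcup_n\{x:\TV{\mu^n-\theta_x\mu^n}<2\}$ and noting that, for a countable algebra $\{B_k\}$ generating the Borel sets, $\TV{\mu^n-\theta_x\mu^n}=2\sup_k|\mu^n(B_k)-\mu^n(B_k-x)|$ is a countable supremum of the Borel maps $x\mapsto\mu^n(B_k-x)$, hence Borel.
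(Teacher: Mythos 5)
Your proposal is correct and, for the core of the theorem (parts (a) and (b)), follows essentially the same strategy as the paper: block the walks at scale $n_0$, split the block increment so that the difference process becomes a lazy symmetric random walk on $\langle x\rangle$ started at $x$, invoke recurrence of such walks on $\Z$ and $\Z/m\Z$, and convert hitting-time tails into total variation bounds via $\TV{\P(S_n\in\cdot)-\P(S^x_n\in\cdot)}\leq 2\P(T>n)$ (the paper likewise cites Corollary 2.28 of Levin--Peres for the $n^{-1/2}$ tail and geometric absorption for the finite-order case). The one genuine difference there is the decomposition lemma: the paper picks $y\in\supp\xi$ for $\xi:=\mu^{n_0}\wedge\theta_x^{-1}\mu^{n_0}$ and a neighborhood $U$ of $y$ with $U\cap xU=\emptyset$, so that $\nu:=\xi((x^{-1}\cdot)\cap U)$ and its shift are mutually singular and $\nu+\theta_x^{-1}\nu\leq\mu^{n_0}$; you instead average, taking $\alpha_+:=\tfrac12(\mu^{n_0}\wedge\theta_{-x}\mu^{n_0})$ so that $\alpha_++\theta_x\alpha_+\leq\mu^{n_0}$ with no localization, which is equally valid and costs only a harmless factor of $\tfrac12$ in $\P(T=n_0)$. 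Elsewhere you diverge more substantially, and in each case more simply: for the group property in (e) the paper transfers both couplings onto one space and uses $\max\{T^x,T^y\}$, while you use the cited equivalence with total-variation convergence plus the triangle inequality and translation invariance; for Borel measurability the paper proves a measurable-selection result for Radon--Nikodym derivatives of absolutely continuous parts (its Proposition 5), whereas you write $\TV{\mu^n-\theta_x\mu^n}$ as $2\sup_k|\mu^n(B_k)-\mu^n(B_k-x)|$ over a countable generating algebra, each term Borel in $x$ by Tonelli --- a noticeably shorter route to the same conclusion; and for the forward direction of (c) you argue contrapositively via Fubini that each set $\{x:\mu^n\wedge\theta_x\mu^n\neq\zero\}$ is $\lambda$-null when $\mu^n\perp\lambda$, where the paper runs a direct contradiction from $\lambda(B_n)>0$ for $B_n=\{x:\TV{\mu^n-\theta_x^{-1}\mu^n}\leq 1\}$ --- the same integral identity, packaged differently. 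Parts (c) converse and (d) match the paper. The two points you flag as delicate --- that resynchronizing after $T$ preserves the law of $S^x$, and the bridge filling in the $n_0$ intermediate steps --- are exactly what the paper's Transfer and Splitting Theorems are invoked for, so your plan closes there as expected.
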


\section{Outline of the Paper}
 
Section~\ref{sec:mainthm} builds to the main theorem,
Theorem~\ref{thm:mainthm},
which generalizes Theorem~\ref{thm:abelianproblemresolutionteaser} (a) and (b).
It is more technical but also applies in some non-Abelian cases.
The reader familiar with the proof of the spread out case~\cite{thorisson2000coupling}
or the purely atomic case \cite{arnaldsson2010coupling} on $\R$ may recognize the proof of
Proposition~\ref{prop:existence}, which shows that if $\mu^{n}$ dominates the
sum of a measure $\nu$ and some shift $\theta_x^{-1}\nu$, then successful exact coupling can be achieved.
Similarly, the proof of the main theorem of this document
follows the spirit of the purely atomic case on $\R$.

With the main theorem proved, Section~\ref{sec:abeliancase} covers parts (a)-(d)
of 
Theorem~\ref{thm:abelianproblemresolutionteaser} as simple corollaries.
That is, it resolves the Abelian case and gives an even simpler description of
$G_s$
in the spread out and purely atomic cases.
Note that Corollary~\ref{cor:connectedspreadoutcase} is more general than 
claimed in Theorem~\ref{thm:abelianproblemresolutionteaser} (c),
as one direction applies in the non-Abelian case without extra restrictions.

Section~\ref{sec:structureofcouplingset} then investigates the structure of $G_s$.
In particular, it is shown to be Borel measurable.
This, together with the fact that $G_s$ is a group,
shows part (e) of Theorem~\ref{thm:abelianproblemresolutionteaser}.

Finally, in Section~\ref{sec:possibleexactcoupling},
the weaker notion of possible exact coupling is studied.
It is noted that the necessary and sufficient conditions derived for successful
exact coupling in the Abelian case are
coincidental.
It is shown that the conditions derived for admitting a
successful exact coupling in the Abelian case are,
in the general case, equivalent to the ostensibly weaker notion of admitting a possible exact coupling, 
but that in the Abelian case admitting a possible exact coupling and
admitting a successful exact coupling are equivalent.
The paper ends with an example on a (non-Abelian) free group for which possible
exact coupling can be done but successful exact coupling
cannot.

\section{The Main Theorem}\label{sec:mainthm}

This section culminates
in the main theorem of the paper, Theorem~\ref{thm:mainthm},
which gives necessary and sometimes sufficient conditions for successful exact coupling to occur,
even in the non-Abelian case.
The transfer and splitting theorems that
appear in \cite{thorisson2000coupling} are used.
Less general versions are stated that are sufficient for the current setting.
The need of a Polish space in the following is also the primary reason $G$ is assumed
to be Polish.

\begin{theorem}[Transfer Theorem]\label{thm:transfer}\cite{thorisson2000coupling}
    Suppose $(\Omega,\F,\P)$ is a probability space and $Y_1$ is a random element in
    $(E_1,\mathcal{E}_1)$.
    Further suppose that there is a pair $(Y_1',Y_2')$ on some probability space $(\Omega',\F',\P')$
    with $Y_2'$ a random element in a Polish space $(E_2,\mathcal{E}_2)$, and $Y_1$ is a version of $Y_1'$.
    Then $Y_2'$ can be transferred to $(\Omega,\F,\P)$, i.e.\ $(\Omega,\F,\P)$ can be extended to accommodate
    a random element $Y_2$ which is conditionally independent of the original space given $Y_1$,
    and with $(Y_1,Y_2)$ having the same distribution as $(Y_1',Y_2')$.
    This transfer procedure can be repeated countably many times.
\end{theorem}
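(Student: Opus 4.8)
The plan is to build the desired random element $Y_2$ from a regular conditional distribution of $Y_2'$ given $Y_1'$, and then to realize that conditional law on the original space using an independent source of randomness. The Polish hypothesis on $(E_2,\mathcal{E}_2)$ enters precisely here, since it guarantees that such a regular conditional distribution exists. Concretely, I would first produce a probability kernel $K:E_1\times\mathcal{E}_2\to[0,1]$ — so that $x\mapsto K(x,B)$ is measurable for each $B\in\mathcal{E}_2$, while $K(x,\cdot)$ is a probability measure on $E_2$ for each $x$ — with the property that $K(Y_1',\cdot)$ is a version of the conditional law $\P'(Y_2'\in\cdot\mid Y_1')$. The joint law of $(Y_1',Y_2')$ is then recovered by disintegration: for every measurable $C\subseteq E_1\times E_2$,
\begin{equation}
    \P'\big((Y_1',Y_2')\in C\big)=\int_{E_1}\int_{E_2}1_C(x,y)\,K(x,dy)\,\P'(Y_1'\in dx).
\end{equation}

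Next I would manufacture the transferred element on an enlargement of $(\Omega,\F,\P)$. Because $E_2$ is Polish, the kernel $K$ admits a randomization: there is a measurable map $\phi:E_1\times[0,1]\to E_2$ such that, for each fixed $x\in E_1$, the pushforward of Lebesgue measure under $u\mapsto\phi(x,u)$ equals $K(x,\cdot)$. Form the product space $\tilde\Omega:=\Omega\times[0,1]$ with $\tilde\F:=\F\otimes\mathcal{B}([0,1])$ and $\tilde\P:=\P\otimes\mathrm{Leb}$, view $Y_1$ as a function on $\tilde\Omega$ depending only on the first coordinate, and set $Y_2(\omega,u):=\phi(Y_1(\omega),u)$. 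Then $U(\omega,u):=u$ is uniform on $[0,1]$ and independent of $\F$ under $\tilde\P$, and $Y_2$ depends on $\F$ only through $Y_1$ and $U$.

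Then I would verify the two required properties. Conditional independence of $Y_2$ and $\F$ given $Y_1$ holds because, given $Y_1$, the only remaining randomness in $Y_2$ comes from the independent uniform $U$; hence the conditional law of $Y_2$ given $\F$ equals $K(Y_1,\cdot)$, a function of $Y_1$ alone. The distributional identity $(Y_1,Y_2)\overset{d}{=}(Y_1',Y_2')$ then follows by combining $Y_1\overset{d}{=}Y_1'$ (since $Y_1$ is a version of $Y_1'$) with the fact that the conditional law of $Y_2$ given $Y_1$ is $K(Y_1,\cdot)$, matched against the disintegration displayed above. For the countable iteration, rather than a single uniform I would adjoin an i.i.d.\ sequence $\{U_k\}_{k\geq1}$ of uniforms — equivalently, enlarge by $[0,1]^\N$ — and let each successive transfer consume one coordinate; because the adjoined uniforms are mutually independent and independent of $\F$, the conditional independence structure is preserved at every stage.

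The main obstacle is the first step: securing the regular conditional distribution $K$ and its measurable randomization $\phi$. Both are standard facts when the target space is standard Borel, but both genuinely fail for arbitrary measurable spaces, which is exactly why the Polish hypothesis on $E_2$ is indispensable and, as the excerpt notes, is the primary reason $G$ is assumed Polish. Once $K$ and $\phi$ are in hand, the remaining verifications are routine measure-theoretic bookkeeping on the product extension.
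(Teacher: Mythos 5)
Your proposal is correct, but note that the paper offers no proof of this statement at all: it is quoted from Thorisson's book and used as a black box, so there is nothing internal to compare against. Your argument is the standard textbook proof of the transfer theorem, and it holds together: the Polish hypothesis on $(E_2,\mathcal{E}_2)$ yields a regular conditional distribution $K$ of $Y_2'$ given $Y_1'$, the randomization lemma (valid for kernels into a standard Borel space) yields the measurable $\phi$ with $\phi(x,\cdot)_*\mathrm{Leb}=K(x,\cdot)$, and the product extension $\Omega\times[0,1]$ with $Y_2:=\phi(Y_1,U)$ gives both the conditional independence (since $\tilde\E[1_{Y_2\in B}\mid\F]=K(Y_1,B)$ is $\sigma(Y_1)$-measurable) and the correct joint law via the disintegration identity. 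One small remark: the randomization step is a convenience rather than a necessity — one can instead take $\tilde\Omega:=\Omega\times E_2$ with $\tilde\P(d\omega,dy):=\P(d\omega)\,K(Y_1(\omega),dy)$ and $Y_2(\omega,y):=y$, which avoids invoking the randomization lemma; your $[0,1]^\N$ device is, however, the cleaner way to organize the countable iteration, provided you observe (as you do) that at stage $k$ the ``original space'' to be conditioned away has grown to include the previously consumed uniforms, which is harmless since $U_k$ is independent of all of it.
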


\begin{theorem}[Splitting Theorem]\label{thm:splitting}\cite{thorisson2000coupling}
    Suppose $(\Omega, \F,\P)$ is a probability space and $Y$ is a random element in $(E,\mathcal{E})$.
    Let $\{\nu_i\}_{i=0}^\infty$ be subprobability measures on $(E,\mathcal{E})$ and suppose
    $\P(Y \in \cdot) \geq \sum_i \nu_i$.
    Then $(\Omega,\F,\P)$ can be extended to accommodate a nonnegative integer-valued random variable $K$,
    called a splitting variable, such that $\P(Y \in \cdot, K=i) = \nu_i$.
    Moreover, $K$ is conditionally independent of the original space given $Y$.
    This splitting operation can be repeated countably many times.
\end{theorem}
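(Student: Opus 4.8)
The plan is to realize the splitting variable $K$ as a deterministic measurable function of $Y$ together with a single auxiliary uniform random variable, and to adjoin that uniform to $(\Omega,\F,\P)$ in the correct conditional-independence relationship by invoking the Transfer Theorem (Theorem~\ref{thm:transfer}). First I would record the law $\mu_Y := \P(Y \in \cdot)$ of $Y$ on $(E,\mathcal E)$. Since $\sum_i \nu_i \leq \mu_Y$, each $\nu_i$ is dominated by $\mu_Y$ and hence absolutely continuous with respect to it, so by the Radon--Nikodym theorem I would fix densities $f_i := d\nu_i/d\mu_Y \geq 0$; by monotone convergence the hypothesis $\sum_i \nu_i \leq \mu_Y$ becomes $\sum_{i=0}^\infty f_i \leq 1$ holding $\mu_Y$-almost everywhere. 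Setting $F_{-1} \equiv 0$ and $F_i := \sum_{j=0}^i f_j$, these partial sums are measurable, nondecreasing in $i$, and bounded by $1$.

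Next I would adjoin a uniform random variable. Apply Theorem~\ref{thm:transfer} with the given $Y$ in the role of $Y_1$ and with $(Y_1',Y_2') = (Y',U')$ on an auxiliary space, where $Y' \sim \mu_Y$ and $U' \sim \mathrm{Unif}(0,1)$ is independent of $Y'$; since $Y$ is a version of $Y'$ and $[0,1]$ is Polish, the theorem extends $(\Omega,\F,\P)$ to carry a variable $U$ with $(Y,U) \stackrel{d}{=} (Y',U')$ and $U$ conditionally independent of $\F$ given $Y$. In particular, conditionally on $Y = y$ the variable $U$ is uniform on $[0,1]$ and independent of the original space. I would then define $K := i$ on $\{F_{i-1}(Y) \leq U < F_i(Y)\}$ and, to absorb any residual mass $1 - \sum_j f_j(Y)$, set $K := \infty$ on $\{U \geq \sum_j f_j(Y)\}$ (when the $\nu_i$ exhaust $\mu_Y$ this residual event is null and $K$ is genuinely integer-valued); measurability of $K$ follows from that of the $F_i$.

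It then remains to verify the two conclusions. Fixing $B \in \mathcal E$ and conditioning on $Y$, using that $U$ is uniform given $Y$, one computes
\begin{align*}
  \P(Y \in B,\ K = i)
  &= \int_B \P\big(F_{i-1}(y) \leq U < F_i(y) \,\big|\, Y = y\big)\,\mu_Y(dy) \\
  &= \int_B \big(F_i(y) - F_{i-1}(y)\big)\,\mu_Y(dy)
   = \int_B f_i(y)\,\mu_Y(dy) = \nu_i(B),
\end{align*}
which is the required identity. For the conditional independence, $K$ is a function of $(Y,U)$ and $U$ is conditionally independent of $\F$ given $Y$, so $K$ is conditionally independent of $\F$ given $Y$. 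Finally, since Theorem~\ref{thm:transfer} permits countably many successive transfers, one may adjoin a fresh independent uniform at each stage and repeat the construction countably often, giving the last assertion.

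The step I expect to require the most care is the bookkeeping around the residual mass together with the conditional-independence claim: one must ensure that adjoining $U$ disturbs neither the law of $Y$ nor the original space beyond a conditionally independent enlargement (exactly what the Transfer Theorem provides), and one must decide how the leftover probability $1 - \sum_j f_j$ is labeled so that the identities $\P(Y \in \cdot, K = i) = \nu_i$ hold simultaneously for every $i$.
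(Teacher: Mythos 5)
The paper does not prove this statement---it is quoted from Thorisson's book---so there is no internal proof to compare against; your argument is the standard one (Radon--Nikodym densities $f_i = d\nu_i/d\mu_Y$, an auxiliary uniform adjoined via the Transfer Theorem, and $K$ read off from the cumulative sums $F_i(Y)$) and it is correct, including the verification that $K$ inherits conditional independence from $U$. The only point worth tidying is the residual mass: the theorem calls $K$ nonnegative integer-valued, so rather than sending the event $\{U \geq \sum_j f_j(Y)\}$ to $K=\infty$ you should either assume $\sum_i \nu_i = \P(Y\in\cdot)$ (which is how the paper applies the result in Proposition~3.5, folding the leftover into $\nu_0$) or reserve one integer label for the residual; this is a labeling convention, not a gap.
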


Theorems~\ref{thm:transfer} and~\ref{thm:splitting} are useful for constructing random variables with specific dependencies
on a single probability space.
A simple example application is the following.
If one can construct up to a countable number of successful exact couplings,
then in fact they can be made to occur on the same probability space.

\begin{proposition}\label{prop:singlecouplingspace}
	If $N \in \N \cup \{\infty\}$ and $\{x_i\}_{i=1}^N \subseteq G_s$,
	then there exists
	a single probability space $(\Omega,\F,\P)$ housing $S\sim\RW(e,\mu)$ and
	$S^{x_i}\sim \RW(x_i,\mu)$ for each $i$ such that
    for every $i$ there is an a.s.\ finite random time $T_i$ with $S^{x_i}_n = S_n$ for all $n \geq T_i$.
	That is, $(S, S^{x_i}, T_i)$ is a successful exact coupling for all $i$.
\end{proposition}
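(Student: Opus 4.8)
The plan is to fix a single realization of $S \sim \RW(e,\mu)$ and then graft each individual coupling onto it by means of the Transfer Theorem (Theorem~\ref{thm:transfer}), using the common walk $S$ as a shared anchor; the Splitting Theorem is not needed here. Begin with a probability space $(\Omega,\F,\P)$ carrying $S \sim \RW(e,\mu)$. For each index $i$ with $1 \leq i \leq N$, the hypothesis $x_i \in G_s$ supplies, on some auxiliary probability space, a successful exact coupling $(\tilde S^{(i)}, S^{x_i}, T_i)$ in which $\tilde S^{(i)} \sim \RW(e,\mu)$, $S^{x_i} \sim \RW(x_i,\mu)$, the time $T_i$ is a.s.\ finite, and $S^{x_i}_n = \tilde S^{(i)}_n$ for all $n \geq T_i$ almost surely.

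I would then apply the Transfer Theorem with $Y_1' := \tilde S^{(i)}$ and $Y_2' := (S^{x_i}, T_i)$. Since $\tilde S^{(i)}$ and $S$ share the law $\RW(e,\mu)$, the element $S$ is a version of $Y_1'$; moreover $Y_2'$ takes values in the Polish space $G^\N \times (\N \cup \{\infty\})$, so the hypotheses are met. The transfer produces, on an extension of $(\Omega,\F,\P)$, a relabeled pair $(S^{x_i}, T_i)$ that is conditionally independent of the original data given $S$ and for which $(S, S^{x_i}, T_i)$ has the same joint law as $(\tilde S^{(i)}, S^{x_i}, T_i)$. Because the Transfer Theorem may be iterated countably many times, I repeat this successively over all $i \leq N$, each step enlarging the space while leaving $S$ untouched, so that in the end every $S^{x_i}$ and every $T_i$ live on one common space.

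The remaining step, and the one deserving care, is to recover the coincidence property from the equality of joint laws. The event $E_i := \{\, S^{x_i}_n = S_n \text{ for all } n \geq T_i \,\}$ is measurable with respect to the triple $(S, S^{x_i}, T_i)$, since its indicator is a Borel function of that triple; on the auxiliary space the analogous event for $(\tilde S^{(i)}, S^{x_i}, T_i)$ has probability one. As the transfer preserves this joint law, it follows that $\P(E_i) = 1$, and likewise $T_i$ remains a.s.\ finite. Hence $(S, S^{x_i}, T_i)$ is a successful exact coupling on the common space for each $i$, which is exactly the claim. The only genuine subtlety is this last observation—that the almost sure coincidence, being an event determined by the joint distribution of the triple, is automatically inherited under transfer—after which the result is immediate.
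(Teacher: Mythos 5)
Your proposal is correct and follows essentially the same route as the paper: the paper also starts from one space containing $S$ and invokes the Transfer Theorem countably many times, once per $x_i$, to graft each auxiliary coupling onto the common space anchored by $S$. Your write-up simply makes explicit the details the paper leaves implicit (transferring the pair $(S^{x_i},T_i)$ as the Polish-space-valued element, and observing that the a.s.\ coincidence event is determined by the joint law of the triple and hence preserved), all of which is sound.
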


\begin{proof}
	By assumption, there is a successful exact coupling of
	$\RW(e,\mu)$ and $\RW(x_1,\mu)$ on some $(\Omega,\F,\P)$.
	Since $x_i \in G_s$ for each $i$, the extension procedure given by Theorem~\ref{thm:transfer} 
    can be repeated countably many times, once for each $S^{x_i}$,
    to give a single extension of $(\Omega,\F,\P)$ on which $S$ and $S^{x_i}$ couple for every $i$.
    \qed{}
\end{proof}

This gives the first structural result about the successful exact coupling set.

\begin{corollary}\label{cor:couplinggroup}
	$G_s$ is a group.
\end{corollary}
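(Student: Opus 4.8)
The plan is to verify the three group axioms for $G_s$: that it contains the identity $e$, that it is closed under inversion, and that it is closed under the group product. Since the successful exact coupling relation is symmetric in its two arguments, these three facts suffice. Membership of the identity is immediate, since taking $S^e := S$ with coupling time $T := 0$ exhibits a successful exact coupling of $\RW(e,\mu)$ with itself, so $e \in G_s$.

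The engine behind both closure properties is the left-translation observation recorded in the Introduction: if $S$ is an $(a,\mu)$-random walk and $c \in G$, then $cS$ is a $(ca,\mu)$-random walk, because left-multiplication commutes with the right increments $X_1 X_2 \dotsm X_n$. Consequently, multiplying every coordinate of a coupling on the left by $c$ preserves the coupling property with the very same coupling time, so $\RW(a,\mu)$ and $\RW(b,\mu)$ admit a successful exact coupling if and only if $\RW(ca,\mu)$ and $\RW(cb,\mu)$ do. For closure under inverses, suppose $x \in G_s$ and let $(S, S^x, T)$ be a successful exact coupling of $\RW(e,\mu)$ and $\RW(x,\mu)$. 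Applying the observation with $c := x^{-1}$ turns this into a successful exact coupling $(x^{-1}S, x^{-1}S^x, T)$ of $\RW(x^{-1},\mu)$ and $\RW(e,\mu)$, whence $x^{-1} \in G_s$.

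For closure under products, suppose $x, y \in G_s$. Applying the same observation with $c := x$ to a successful exact coupling of $\RW(e,\mu)$ and $\RW(y,\mu)$ yields a successful exact coupling of $\RW(x,\mu)$ and $\RW(xy,\mu)$. I then have two couplings sharing the middle law $\RW(x,\mu)$: one of $\RW(e,\mu)$ with $\RW(x,\mu)$ coming from $x \in G_s$, and one of $\RW(x,\mu)$ with $\RW(xy,\mu)$ just produced. The goal is to glue them transitively into a single coupling of $\RW(e,\mu)$ with $\RW(xy,\mu)$, and this is where I would invoke the Transfer Theorem (Theorem~\ref{thm:transfer}), exactly as in Proposition~\ref{prop:singlecouplingspace}. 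Starting on the space carrying $(S, S^x, T)$, I regard $S^x$ as a version of the $\RW(x,\mu)$-marginal of the second coupling and transfer its partner onto this space, obtaining $S^{xy} \sim \RW(xy,\mu)$ with $(S^x, S^{xy})$ distributed as the second coupling pair. Setting $T'' := \inf\{m : S^x_n = S^{xy}_n \text{ for all } n \geq m\}$ and $\hat T := \max(T, T'')$, the relations $S_n = S^x_n$ for $n \geq T$ and $S^x_n = S^{xy}_n$ for $n \geq T''$ combine to give $S_n = S^{xy}_n$ for $n \geq \hat T$, with $\hat T$ a.s.\ finite; hence $xy \in G_s$.

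The main obstacle is precisely this transitivity step: the two couplings live a priori on different probability spaces, so they cannot simply be concatenated. The Transfer Theorem resolves this by re-realizing the second coupling on the first space conditionally on the shared walk $S^x$, and the Polishness of the path space $G^\N$ is exactly what makes the transfer legitimate. Once the two couplings coexist on one space, taking the maximum of the two coupling times finishes the argument with essentially no further work.
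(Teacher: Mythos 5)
Your proof is correct and rests on the same two ingredients as the paper's: left-translation equivariance of $\RW(\cdot,\mu)$ and the Transfer Theorem to realize two couplings sharing a common marginal on one probability space, then taking the maximum of the two coupling times. The paper packages this slightly more compactly by gluing the couplings $(S,S^x,T^x)$ and $(S,S^y,T^y)$ along the common walk $S\sim\RW(e,\mu)$ (via Proposition~\ref{prop:singlecouplingspace}) and left-multiplying by $x^{-1}$ to conclude $x^{-1}y\in G_s$ in one step, whereas you verify the three axioms separately and glue along the middle law $\RW(x,\mu)$; the content is the same.
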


\begin{proof}
    Consider $x,y \in G_s$. By Proposition~\ref{prop:singlecouplingspace}, respectively define successful exact couplings
    $(S,S^x, T^x)$ and $(S, S^y, T^y)$ on a common probability space.
    Then for $n \geq \max\{T^x,T^y\}$ it holds that $S^y_n = S_n = S^x_n$.
    In particular, $(x^{-1}S^x, x^{-1}S^y, \max\{T^x, T^y\})$ is a successful exact coupling
    of $\RW(e,\mu)$ and $\RW(x^{-1}y,\mu)$.
    Thus $x^{-1}y \in G_s$.
    \qed{}
\end{proof}

The following works towards determining a specific scenario when a successful exact coupling
can be constructed and it is an extension of a result of {\"O}.~Arnaldsson in \cite{arnaldsson2010coupling}
with nearly identical proof.

\begin{proposition}\label{prop:existence}
    Fix $x \in G$ and suppose that $n\geq1$ is
    such that $\mu^{n} \geq \nu + \theta_x^{-1}\nu$ for a nonzero measure $\nu$.
	If $G$ is Abelian or, more generally, if there is $B$ with $\mu^{n}(B)=1$
	such that $x$ commutes with all of $B$, then $x \in G_s$.
	In this case, $\RW(e,\mu)$ and $\RW(x,\mu)$ admit a
    successful exact coupling with a coupling time
    $T$ for which $T/n$ has the same distribution as the hitting time of $e$ of a 
    lazy simple symmetric random walk on the cyclic group $\langle x \rangle$ started at $x$ with probability
    $1-2\nu(G)$ of not moving at each step.
    In particular, $\P(T = n) = \nu(G)$.
\end{proposition}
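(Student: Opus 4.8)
The plan is to run both walks in blocks of $n$ steps and to drive the discrepancy between them by a lazy simple symmetric random walk on $\langle x\rangle$, using the hypothesis $\mu^{n} \geq \nu + \theta_x^{-1}\nu$ to manufacture the $x^{\pm 1}$ shifts via the Splitting Theorem. Write $\rho := \mu^{n} - \nu - \theta_x^{-1}\nu \geq \zero$; since shifting preserves total mass, $\theta_x^{-1}\nu(G) = \nu(G) =: p$, so $p \in (0,\tfrac12]$ and $\rho(G) = 1 - 2p$. First I would realize $S \sim \RW(e,\mu)$ with i.i.d.\ $\mu$-distributed steps $X_1, X_2,\dots$ on some $(\Omega,\F,\P)$, and record the block increments $U_k := X_{(k-1)n+1}\dotsm X_{kn}$, which are i.i.d.\ with law $\mu^{n}$. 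Applying the Splitting Theorem to each $U_k$ with the three subprobability measures $\nu$, $\theta_x^{-1}\nu$, $\rho$, whose sum is $\mu^{n} = \P(U_k \in \cdot)$, produces splitting variables $K_k \in \{0,1,2\}$, conditionally independent of everything else given $U_k$, with $\P(K_k = 1) = \P(K_k = 2) = p$ and $\P(K_k = 0) = 1 - 2p$. I then set $\epsilon_k := 0, -1, +1$ according as $K_k = 0, 1, 2$.

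The discrepancy is tracked by $g_k := 1 + \sum_{j\le k}\epsilon_j$, so that $x^{g_k}$ performs a lazy simple symmetric random walk on $\langle x\rangle$ started at $x$ that holds with probability $1-2p$ and steps by $x^{\pm1}$ with probability $p$ each; let $\tau := \inf\{k \geq 1 : x^{g_k} = e\}$ be its hitting time of the identity. I would build the companion walk $S^x$ so that its $k$-th block increment is $V_k := x^{\epsilon_k}U_k$ while $k \le \tau$ and $V_k := U_k$, a verbatim copy of $S$'s steps, once $k > \tau$, filling in genuine within-block steps by the Transfer Theorem from the conditional law of an i.i.d.\ $\mu$-tuple given its product. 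The key computation is that the shift exchanges the two pieces of the splitting: a point sampled from $\nu$ and shifted by $x^{-1}$ has law $\theta_x^{-1}\nu$, and one sampled from $\theta_x^{-1}\nu$ and shifted by $x$ has law $\theta_x\theta_x^{-1}\nu = \nu$, while the $\rho$-piece is untouched; hence each $V_k$ again has law $\mu^{n}$, giving $S^x \sim \RW(x,\mu)$.

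Next I would invoke the commutativity hypothesis: since $\mu^{n}(B) = 1$ and $x$ commutes with every element of $B$, each $U_k$ commutes with $x$ almost surely, so all the leftover powers of $x$ collect to the front to give $S^x_{kn} = x^{g_k}S_{kn}$ for $k \le \tau$. In particular the two walks agree at the block boundary $kn$ exactly when $x^{g_k} = e$, i.e.\ at $k = \tau$, and the verbatim copying thereafter keeps them equal at every later time. Thus $T := n\tau$ is a coupling time, $T/n = \tau$ has the asserted hitting-time distribution, and $\P(T = n) = \P(\epsilon_1 = -1) = \P(K_1 = 1) = p = \nu(G)$. Finiteness of $T$ reduces to recurrence of the gap chain: when $x$ has infinite order $\langle x\rangle \cong \Z$ and the lazy simple symmetric walk is recurrent, while when $x$ has order $m$ the chain is an irreducible walk on the finite set $\langle x\rangle \cong \Z_m$; in either case $\tau < \infty$ almost surely because $p > 0$, so the exact coupling is successful and $x \in G_s$.

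The main obstacle is confirming that $S^x$ really is a bona fide $(x,\mu)$-random walk despite the surgery at $\tau$. The delicate point is that after the walks meet we replace the prescribed increments by verbatim copies of $S$; to see that this does not corrupt the marginal law one checks that $n\tau$ is a stopping time for the filtration generated by the steps of $S$ together with the splitting variables, and then combines the strong Markov property of the i.i.d.\ step sequence, so that the post-$\tau$ increments remain i.i.d.\ $\mu$ and independent of the history, with the Transfer Theorem, which legitimizes realizing the whole of $S^x$ on $(\Omega,\F,\P)$ with the correct joint law relative to $S$. The commutativity hypothesis is exactly what is needed to express the discrepancy as a single power of $x$ and thereby keep the gap process inside $\langle x\rangle$; in the Abelian case one simply takes $B = G$.
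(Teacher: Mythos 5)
Your proposal is correct and follows essentially the same route as the paper's proof: block the walk into $n$-step increments, apply the Splitting Theorem to the decomposition $\mu^n \geq \nu + \theta_x^{-1}\nu$, perturb the blocks by $x^{\pm 1}$ so that the swapped pieces preserve the law $\mu^n$, use commutativity to reduce the discrepancy process to a recurrent lazy simple symmetric walk on $\langle x\rangle$, and recover genuine within-block steps via the Transfer Theorem. The only differences are cosmetic (tracking the exponent $g_k$ explicitly rather than the ratio $R'R^{-1}$, and spelling out the stopping-time justification for the post-coupling surgery, which the paper leaves implicit).
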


\begin{proof}
	Begin with an $(\Omega,\F,\P)$ housing $S\sim \RW(e,\mu)$ with step-lengths $\{X_i\}_{i=1}^\infty$.
	An extension of $(\Omega,\F,\P)$ and an $S^x\sim \RW(x,\mu)$ on that extension are constructed
    such that successful exact coupling occurs.
    Let 
    \begin{equation}
        L_i := X_{(i-1)n+1}\dotsm X_{in}
    \end{equation}
    for $i \geq 1$ so that $\{L_i\}_{i=1}^\infty$ is an i.i.d.\ family
    and $\P(L_i \in \cdot) \geq \nu + \theta_x^{-1}\nu$.
    By Theorem~\ref{thm:splitting},
    expand $(\Omega,\F,\P)$ to accommodate random variables
    $\{K_i\}_{i=1}^\infty$ taking values in $\{0,1,2\}$ such that
    $\{(L_i,K_i)\}_{i=1}^\infty$ is an i.i.d.\ sequence
    and 
    \begin{equation}\label{eq:Lkdefn}
        \P(L_i \in \cdot, K_i=1) = \nu, \qquad \P(L_i \in \cdot, K_i=2) = \theta_x^{-1}\nu.
    \end{equation}
    For $i \geq 1$ define
    \begin{equation}
        L_i ' :=
        \begin{cases}
            L_i, & K_i = 0,\\
            x^{-1}L_i, & K_i=1,\\
            xL_i, & K_i=2.
        \end{cases}
    \end{equation}
    It is elementary to check using~\eqref{eq:Lkdefn} that $L_i'$ has the same distribution as $L_i$.
    Let $R$ be the random walk started at $e$ with step-lengths $\{L_i\}_{i=1}^\infty$,
    and let $R'$ be the random walk started at $x$ with
    step-lengths $\{L_i'\}_{i=1}^\infty$.
    By construction, $L_i'L_i^{-1} \in \{e, x, x^{-1}\}$.
    By assumption, it is possible to choose $B$ with $\mu^{n}(B)=1$ such that $x$ commutes with all of $B$.
    Thus, a.s.\ every $L_i,L_i' \in B$ and so a.s.\ for every $i$,
    \begin{equation*}
        R_i'R_i^{-1} = xL_1'\dotsm L_i' L_i^{-1} \dotsm L_1^{-1}
        = x(L_1'L_1^{-1})\dotsm(L_i'L_i^{-1}) \in \langle x \rangle = \{x^m: m \in \Z\}.
    \end{equation*}
    Thus, $R'R^{-1}$ is in distribution the same as a lazy simple symmetric random walk started from $x$ with step lengths
    $\{L_i'L_i^{-1}\}_{i=1}^\infty$.
    The walk has probability $\nu(G)$ to increase the power of $x$, $\nu(G)$ to
	decrease it, and $1-2\nu(G)<1$ to stay put at each time step.
    Since nontrivial lazy simple symmetric random walks on cyclic groups are recurrent, 
    there is an a.s.\ finite random time $M$ with $R'_MR^{-1}_M = e$, i.e.\ the random walks $R'$ and $R$
    meet at time $M$.
    Theorem~\ref{thm:transfer} makes it possible to extend $(\Omega,\F,\P)$
    one final time to accommodate an i.i.d.\ sequence $\{X_i''\}_{i=1}^\infty$ with each $X_i''$ having distribution $\mu$
    and such that $L_i' = X_{(i-1)n}''\dotsm X_{in}''$ for $i \geq 1$.
    Define $T := Mn$ and let $S^x$ be the random walk started at $x$ with step-lengths
    \begin{equation}
        X_i' :=
        \begin{cases}
            X_i'', & i \leq T,\\
            X_i, & i > T.
        \end{cases}
    \end{equation}
    Then $S$ and $S^x$ witness the definition of successful exact coupling with coupling time $T$.
    If $K_1=1$, then $R$ and $R'$ meet in one time step, so $T=n$, showing $\P(T = n) = \P(K_1=1) = \nu(G)$.
    \qed{}
\end{proof}

In the previous proof, the problem is reduced to the case where
a difference process (or, in the non-Abelian case, something that resembles a difference process) is a random walk. 
Since a general random walk may be transient,
it is important to the proof that the difference process is made to be a random walk not on all of $G$, 
but rather
on the cyclic group generated by $x$, so that the analysis reduces to that of $\Z$ or $\Z/d\Z$.
This highlights the fact that the joint distribution of $S$ and $S^x$ required
to cause successful exact coupling
is very special, and could not, except in trivial cases, be achieved with $S$ and $S^x$ being independent
before the coupling time $T$.

The main theorem of the document follows.

\begin{theorem}[Main Theorem]\label{thm:mainthm}
    Fix $x \in G$.
	If $x \in G_s$,
	then there is $n\geq 1$ such that
    $\mu^{n} \wedge \theta_x^{-1}\mu^{n} \neq \zero$.
    Conversely, if $n_0 \geq 1$ is such that $\mu^{n_0}\wedge
	\theta_x^{-1}\mu^{n_0}
	\neq \zero$ and there exists
	$B$ with $\mu^{n_0}(B)=1$ such that $x$ commutes with all of $B$,
    then $x \in G_s$.
	In this case,
	there exists a successful exact coupling of $\RW(e,\mu)$ and $\RW(x,\mu)$
    with a coupling time $T$ satisfying $\P(T = n_0) > 0$.
    Moreover, there is $C=C(\mu,x,n_0) > 0$ such that for $S \sim
	\RW(0,\mu)$ and $S^x \sim \RW(x,\mu)$ under $\P$, one has
	\begin{itemize}
		\item if $x$ has infinite order,
			\begin{align}
				\TV{\P(S_n \in \cdot) - \P(S^x_n \in \cdot)} \leq
			\frac{C}{\sqrt{n}},\qquad n \geq 1,
			\end{align}
		\item if $x$ has finite order, there is $\rho=\rho(\mu,x,n_0) \in (0,1)$
			such that 
			\begin{align}
				\TV{\P(S_n \in \cdot) - \P(S^x_n \in \cdot)} \leq C
			\rho^n \qquad n \geq 1.
			\end{align}
	\end{itemize}
\end{theorem}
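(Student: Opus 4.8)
The plan is to treat the three assertions in turn: necessity of the meet condition, sufficiency under the commuting hypothesis (together with the coupling-time lower bound $\P(T=n_0)>0$), and finally the total variation rates.

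For necessity, suppose $x\in G_s$ and fix a successful exact coupling $(S,S^x,T)$. Since $T<\infty$ a.s., I would choose $n$ large enough that $\P(T\le n)>0$; then $S_n=S^x_n$ on the event $\{T\le n\}$, so the pair $(S_n,S^x_n)$ is a coupling of $\mu^n=\P(S_n\in\cdot)$ and $\theta_x\mu^n=\P(S^x_n\in\cdot)$ that agrees with positive probability. Since any coupling of two laws agrees with probability at most the total mass of their meet, this forces $\mu^n\wedge\theta_x\mu^n\neq\zero$. Applying $\theta_x^{-1}$, which is a Borel measure isomorphism and so sends a nonzero meet to a nonzero meet, converts this into $\mu^n\wedge\theta_x^{-1}\mu^n\neq\zero$, as claimed.

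For sufficiency I would reduce to Proposition~\ref{prop:existence}, whose hypothesis is a \emph{sum} domination $\mu^{n_0}\ge\nu+\theta_x^{-1}\nu$, whereas the present hypothesis gives only the \emph{meet} $\sigma:=\mu^{n_0}\wedge\theta_x^{-1}\mu^{n_0}\neq\zero$. From $\sigma\le\theta_x^{-1}\mu^{n_0}$ one gets $\theta_x\sigma\le\mu^{n_0}$, so $\sigma$ and $\theta_x\sigma$ are each dominated by $\mu^{n_0}$; the only obstruction to adding them is that their supports may overlap. The key step is therefore to carve out of $\sigma$ a nonzero piece whose support is disjoint from its own $x$-shift. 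I would do this by considering the shift graph on $G$ with edges $\{y,xy\}$, the Schreier graph of the $\langle x\rangle$-action by left multiplication (assuming $x\neq e$; the case $x=e$ is the trivial diagonal coupling). This graph has degree at most $2$, hence admits a countable Borel coloring, i.e.\ a partition of $G$ into countably many Borel independent sets $\{C_j\}$. Since $\sigma\neq\zero$, some $\eta:=\sigma|_{C_j}$ is nonzero. Because $C_j$ is independent, $C_j\cap xC_j=\emptyset$, so $\supp\eta$ and $\supp\theta_x\eta$ are disjoint; combined with $\eta\le\sigma\le\mu^{n_0}$ and $\theta_x\eta\le\theta_x\sigma\le\mu^{n_0}$ this yields $\eta+\theta_x\eta\le\mu^{n_0}$. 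Setting $\nu:=\theta_x\eta\neq\zero$ gives $\mu^{n_0}\ge\nu+\theta_x^{-1}\nu$, and Proposition~\ref{prop:existence}, whose commuting hypothesis is exactly the one assumed here, produces a successful exact coupling with coupling time $T$ satisfying $\P(T=n_0)=\nu(G)=\eta(G)>0$. I expect the construction of the disjoint piece $\eta$ to be the main obstacle, as it is the point where the argument must move past the atomic case and invoke a Borel-combinatorial selection.

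For the rates I would feed the coupling just constructed into the coupling inequality \eqref{eq:couplinginequality}, so that $\TV{\P(S_n\in\cdot)-\P(S^x_n\in\cdot)}\le 2\P(T>n)$, and then estimate the tail of $T$. By Proposition~\ref{prop:existence}, $T=n_0M$, where $M$ is the hitting time of $e$ by a lazy simple symmetric random walk on $\langle x\rangle$ started at $x$, with holding probability $1-2\nu(G)$ and equal probabilities $\nu(G)$ of moving in either direction. When $x$ has infinite order, identifying $\langle x\rangle$ with $\Z$ makes $M$ the first passage from $1$ to $0$ of a lazy nearest-neighbor walk; the non-lazy first passage has the classical tail of order $m^{-1/2}$, and inserting the geometric runs of holding steps is a time change by a factor that concentrates exponentially, so $\P(M>m)\le C'm^{-1/2}$ and hence $\P(T>n)=\P(M>n/n_0)\le Cn^{-1/2}$. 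When $x$ has finite order $d$, I would instead identify $\langle x\rangle$ with $\Z/d\Z$: now $M$ is a hitting time for a finite irreducible Markov chain, so a standard sub-geometric argument (from any state, $0$ is reached within $d$ steps with probability bounded below) gives $\P(M>m)\le C'\rho_0^{\,m}$ for some $\rho_0\in(0,1)$, whence $\P(T>n)\le C\rho^{\,n}$ with $\rho=\rho_0^{1/n_0}$. In both cases the small-$n$ range is absorbed into the constant using $\TV{\cdot}\le 2$, yielding the stated bounds for all $n\ge 1$.
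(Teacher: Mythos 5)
Your proposal is correct and follows essentially the same route as the paper: necessity by evaluating the coupling at a time $n$ that $T$ hits with positive probability, sufficiency by extracting a nonzero $\nu$ with $\nu+\theta_x^{-1}\nu\leq\mu^{n_0}$ and invoking Proposition~\ref{prop:existence}, and the rates by feeding the resulting hitting time on $\langle x\rangle$ into \eqref{eq:couplinginequality}. The only real divergence is the carving step: where you partition $G$ into countably many Borel sets $C_j$ with $C_j\cap xC_j=\emptyset$ via a coloring of the Schreier graph and restrict $\xi:=\mu^{n_0}\wedge\theta_x^{-1}\mu^{n_0}$ to one of them, the paper simply picks $y\in\supp\xi$, takes a neighborhood $U$ of $y$ with $U\cap xU=\emptyset$ (possible since $y\neq xy$), and sets $\nu:=\xi((x^{-1}\cdot)\cap U)$, which is nonzero because $y$ lies in the support of $\xi$. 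Your coloring claim is true, and by separability and continuity of multiplication the color classes can even be taken to be differences of basic open sets, so no general Borel chromatic number theorem is really needed; still, it is heavier machinery than the problem requires, since the paper's $U$ is in effect a single color class already known to carry positive $\xi$-mass. The tail estimates also match the paper's in substance: for finite order the paper uses geometric convergence of the absorbed chain on $\Z/d\Z$ where you use the within-$d$-steps minorization, and for infinite order the paper cites the $O(m^{-1/2})$ tail for lazy walks directly (Corollary 2.28 of \cite{levin2017markov}) where you derive it from the non-lazy first-passage tail plus concentration of the number of holding steps; both are sound.
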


\begin{proof}
	Suppose that $S\sim\RW(e,\mu)$ and $S^x\sim\RW(x,\mu)$ witness the definition of successful exact coupling with coupling time $T$ and respective
    step-lengths $\{X_i\}_{i=1}^\infty$ and $\{X_i'\}_{i=1}^\infty$.
    Choose $n$ such that $\P(T = n)>0$, which is possible since $T$ is a.s.\
	finite.
	Then one has the following comparisons of measures,
	\begin{align*}
		\zero 
		&\neq \P(T=n, S_n = S^x_n \in \cdot)\\
		&\leq \P(S_n = S^x_n \in \cdot)\\
		&\leq \P(S_n \in \cdot) \wedge \P(S^x_n \in \cdot)\\
		&= \mu^n \wedge \theta_x \mu^n.
	\end{align*}
	Applying $\theta_x^{-1}$ to both sides of the previous inequality then
	gives $\zero \neq \theta_x^{-1}\mu^n \wedge \mu^n$.

    Conversely, suppose $n_0$ is such that $\xi:=\mu^{n_0} \wedge
	\theta_x^{-1}\mu^{n_0} \neq 0$
	and that there exists
	$B$ with $\mu^{n_0}(B)=1$ such that $x$ commutes with all of $B$.
	In case $x=e$, $\RW(e,\mu)$ and $\RW(x,\mu)$ clearly admit a successful exact coupling with a coupling time $T:= 0$, so assume $x \neq e$.
    Choose $y\in \supp{\xi}$.
    Since $y \neq xy$, it is possible to choose a neighborhood $U$ of $y$ small enough that $U \cap xU = \emptyset$.
    Consider 
    \begin{equation}
        \nu := \xi((x^{-1}\cdot ) \cap U) \neq \zero.
    \end{equation}
    Then
    \begin{equation*}
        \nu \leq \mu^{n_0}\left(x\left( (x^{-1}\cdot) \cap U\right) \right)
        = \mu^{n_0}(\cdot \cap xU)
    \end{equation*}
    and
    \begin{equation*}
        \theta_x^{-1}\nu = \xi(\cdot \cap U) \leq \mu^{n_0}(\cdot \cap U).
    \end{equation*}
    It follows that
    \begin{equation}
        \nu + \theta_x^{-1}\nu \leq \mu^{n_0}(\cdot \cap (U \cup xU)) \leq
		\mu^{n_0}.
    \end{equation}
    
	Proposition~\ref{prop:existence} then shows $\RW(e,\mu)$ and $\RW(x,\mu)$ admit a
    successful exact coupling
    with a coupling time $T$ satisfying $\P(T=n_0)= \nu(G) >0$
    and such that $\tau := T/n_0$ has the distribution of the hitting time to $e$
    of a symmetric lazy random walk on $\langle x \rangle$ with
    $1-2\nu(G)$
    chance of not moving at each step.
    
	Suppose that $x$ has finite order $d$.
    In this case, $\tau$  also has the distribution
    of the hitting time to $0$ of the symmetric lazy random walk on
    $\Z/d\Z$ that is started at $1$ and absorbed when it hits $0$.
	Call $P$ the transition kernel the absorbing walk.
    The absorbing walk converges geometrically quickly to its stationary
    distribution $\delta_0$, cf.~\cite{levin2017markov}.
    Choose $\rho \in (0,1)$ and $C > 0$ such that
	$\TV{P^n(1,\cdot) - \delta_0} \leq C \rho^n$ for all $n \geq 1$.
	By \eqref{eq:couplinginequality}
	it suffices to show that $\P(T > n)$ decays geometrically as $n \to
	\infty$.
	Indeed, for all $n \geq 1$,
    \begin{align*}
	    \P(T > n)
	    &= \P(\tau > n/n_0)\\
	    &= P^{\lfloor n/n_0 \rfloor+1}(1,\{0\}^c)\\
	    &\leq \TV{P^{\lfloor n/n_0 \rfloor+1}(1,\cdot)-\delta_0} +
	    \delta_0(\{0\}^c)\\
	    &\leq C \rho^{\lfloor n/n_0\rfloor+1} + 0\\
		&\leq \tilde C \left(\tilde \rho\right)^n
    \end{align*}
	for new constants $\tilde C > 0$ and $\tilde \rho \in (0,1)$,
    as desired.

    Next suppose that $x$ has infinite order, so $\langle x \rangle \simeq \Z$.
    The tail decay of symmetric lazy random walks on $\Z$ are known,
    see, for example, Corollary 2.28 in~\cite{levin2017markov}.
    In~\cite{levin2017markov}, lazy random walks are defined to have chance
    $1/2$ of staying still at each step, but allowing a $1-2\nu(G) \in (0,1)$
    chance of staying still at each step does not modify the
    result beyond giving a different leading constant in the decay
    rate.
    Hence, by Corollary 2.28 in~\cite{levin2017markov}, choose $C > 0$ 
    such that $\P(\tau > n) \leq \frac{C}{\sqrt{n}}$ for integers $n \geq 1$.
    Thus, for all $n \geq 1$,
    \begin{align*}
	    \P(T > n) 
	    &= \P(\tau > n/n_0)\\
	    &= \P(\tau > \lfloor n/n_0 \rfloor+1)\\
	    &\leq \frac{C}{\sqrt{\lfloor n/n_0 \rfloor +1}}\\
		&\leq \frac{\tilde C}{\sqrt{n}}
    \end{align*}
	for some new constant $\tilde C > 0$,
    as desired.
    \qed{}
\end{proof}

\section{The Abelian Case}\label{sec:abeliancase}

In this section, parts (a)-(d) of 
Theorem~\ref{thm:abelianproblemresolutionteaser} are derived as simple
corollaries of the main theorem.
Firstly, 
determining $G_s$
can be resolved entirely for Abelian $G$.
This is parts (a) and (b) of Theorem~\ref{thm:abelianproblemresolutionteaser}.

\begin{corollary}\label{cor:abeliancase}
    Suppose $G$ is Abelian.
	Then $G_s = \{x \in G: \exists n\geq 1, \mu^{n} \wedge \theta_x^{-1}\mu^{n}
	\neq \zero\}$.
    Moreover, for $x \in G_s$ and $n_0 \geq 1$ such that $\mu^{n_0} \wedge
			\theta_x^{-1}\mu^{n_0}\neq \zero$, there is $C=C(\mu,x,n_0) > 0$ such that for $S \sim
			\RW(0,\mu)$ and $S^x \sim \RW(x,\mu)$ under $\P$, one has
			\begin{itemize}
				\item if $x$ has infinite order,
					\begin{align*}
						\TV{\P(S_n \in \cdot) - \P(S^x_n \in \cdot)} \leq
					\frac{C}{\sqrt{n}},\qquad n \geq 1,
					\end{align*}
				\item if $x$ has finite order, there is
					$\rho=\rho(\mu,x,n_0) \in (0,1)$
					such that 
					\begin{align*}
						\TV{\P(S_n \in \cdot) - \P(S^x_n \in \cdot)} \leq C
					\rho^n \qquad n \geq 1.
					\end{align*}
            \end{itemize}
\end{corollary}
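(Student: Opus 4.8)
The plan is to obtain both the set equality and the two decay estimates as direct specializations of the Main Theorem (Theorem~\ref{thm:mainthm}), the point being that the Abelian hypothesis eliminates the only auxiliary condition appearing in that theorem's converse direction. First I would dispatch the inclusion $G_s \subseteq \{x : \exists\, n \geq 1,\ \mu^n \wedge \theta_x^{-1}\mu^n \neq \zero\}$, which requires no work at all: the first assertion of Theorem~\ref{thm:mainthm} states exactly that $x \in G_s$ forces some $n \geq 1$ to satisfy $\mu^n \wedge \theta_x^{-1}\mu^n \neq \zero$, and this direction is proved with no commutativity restriction. So this inclusion is simply quoted.

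For the reverse inclusion, I would take $x$ with $\mu^{n_0} \wedge \theta_x^{-1}\mu^{n_0} \neq \zero$ for some $n_0 \geq 1$ and invoke the converse half of Theorem~\ref{thm:mainthm}. That converse concludes $x \in G_s$ as soon as there is a Borel set $B$ with $\mu^{n_0}(B) = 1$ on which $x$ commutes with every element. The single observation driving the whole corollary is that in an Abelian group $x$ commutes with all of $G$, so one takes $B = G$ and the hypothesis $\mu^{n_0}(B) = 1$ holds trivially. Hence the commuting condition is vacuously satisfied, Theorem~\ref{thm:mainthm} applies, and $x \in G_s$; this yields the claimed equality of sets, along with a successful exact coupling whose coupling time $T$ satisfies $\P(T = n_0) > 0$.

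The total variation estimates then transfer verbatim. For $x \in G_s$ and any $n_0$ with $\mu^{n_0} \wedge \theta_x^{-1}\mu^{n_0} \neq \zero$, the same application of Theorem~\ref{thm:mainthm} with $B = G$ supplies the constant $C = C(\mu, x, n_0)$, and in the finite-order case the rate $\rho = \rho(\mu, x, n_0) \in (0,1)$, producing the $C/\sqrt{n}$ bound when $x$ has infinite order and the $C\rho^n$ bound when $x$ has finite order. I do not expect any real difficulty here; the entire substance of the proof is the recognition that the auxiliary commutativity requirement in the Main Theorem is automatic for Abelian $G$, so that the corollary is a straightforward reading-off of the general result.
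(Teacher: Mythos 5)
Your proposal is correct and matches the paper's proof exactly: the paper's entire argument is the observation that the commuting-set hypothesis of Theorem~\ref{thm:mainthm} is automatic when $G$ is Abelian (take $B = G$), after which both the set equality and the decay estimates are read off directly. You have simply spelled out the same one-line reduction in more detail.
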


\begin{proof}
    Since $G$ is Abelian, the condition in Theorem~\ref{thm:mainthm} that there
	is $B$ with $\mu^{n}(B)=1$ such that $x$ commutes 
    with all of $B$ is automatic.
    \qed{}
\end{proof}

Next, a generalization of part (c) of Theorem~\ref{thm:abelianproblemresolutionteaser} is covered.
That is, for connected spaces step-lengths are spread out if and only if a successful exact coupling can
always be achieved.
The only if direction is essentially the same as in \cite{berbee1979random},
Theorem 5.3.2, and it also applies in the non-Abelian setting.

\begin{corollary}\label{cor:connectedspreadoutcase}
    Suppose $G$ is locally compact with Haar measure $\lambda$.
	If $G_s = G$, then
	then $\mu$ is spread out.
    If $G$ is connected and Abelian, the converse holds as well.
    More generally, if $G$ is Abelian but not necessarily connected, then $G_s$ is clopen.
\end{corollary}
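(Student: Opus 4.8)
The plan is to treat the three assertions separately, beginning with the implication that $G_s = G$ forces $\mu$ to be spread out. I would argue by contraposition: if $\mu$ is not spread out, then every convolution power $\mu^n$ is singular with respect to $\lambda$, say carried by a Borel set $N_n$ with $\lambda(N_n) = 0$. The crux is the measure-theoretic lemma that \emph{if $\nu$ is a finite Borel measure singular to $\lambda$, then $\nu \wedge \theta_x\nu = \zero$ for $\lambda$-a.e.\ $x$.} Applying this to each $\nu = \mu^n$ and recalling that inversion preserves $\lambda$-null sets, the set $\{x : \exists\, n \geq 1,\ \mu^n \wedge \theta_x^{-1}\mu^n \neq \zero\}$ is a countable union of $\lambda$-null sets, hence $\lambda$-null and in particular a proper subset of $G$. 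By Theorem~\ref{thm:mainthm} this set contains $G_s$, so $G_s \neq G$. This argument is the one used by Berbee and uses no commutativity, so it applies to general locally compact $G$.

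To establish the lemma I would note that $\nu \wedge \theta_x\nu$ is carried by $N \cap xN$ and dominated by $\nu$, so $(\nu \wedge \theta_x\nu)(G) \leq \nu(N \cap xN) = \int 1_N(y)\,1_N(x^{-1}y)\,d\nu(y)$. Integrating this in $x$ against $\lambda$ and applying Tonelli, the inner integral $\int_G 1_N(x^{-1}y)\,d\lambda(x)$ equals $\rho(N)$, where $\rho$ is the right Haar measure obtained as the image of $\lambda$ under inversion; since $\rho$ and $\lambda$ have the same null sets this is $0$. Hence $\int_G \nu(N \cap xN)\,d\lambda(x) = 0$, which forces $\nu \wedge \theta_x\nu = \zero$ for $\lambda$-a.e.\ $x$. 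The only point requiring care is this modular-function bookkeeping in the non-unimodular case, but as it only reweights $\lambda$ absolutely continuously, null sets remain null.

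For the converse, assume $G$ is Abelian and $\mu$ is spread out. First I would convert the spread-out hypothesis into a geometric statement: by inner regularity extract a compact $C$ of positive measure with $\mu^m \geq \epsilon 1_C\lambda$, and then convolution yields $\mu^{2m} \geq h\lambda$, where $h(y) = \epsilon^2 \lambda(C \cap (y - C))$ is continuous and strictly positive on a nonempty open set $V$. Comparing absolutely continuous minorants, $\mu^{2m} \wedge \theta_x^{-1}\mu^{2m} \geq \min\{h(\cdot),\, h(\cdot + x)\}\,\lambda$, which is nonzero precisely when $V \cap (V - x)$ has positive measure, i.e.\ when $x \in V - V$. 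By Corollary~\ref{cor:abeliancase} this gives $V - V \subseteq G_s$, so $G_s$ contains an open neighborhood of the identity. Since $G_s$ is a subgroup by Corollary~\ref{cor:couplinggroup}, it is therefore an open subgroup, and an open subgroup is automatically closed; this establishes the ``more generally'' clause that $G_s$ is clopen. When $G$ is additionally connected, the only nonempty clopen subset is $G$ itself, so $G_s = G$.

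I expect the main obstacle to be the singular-measure lemma and its clean execution on a general locally compact group: establishing the $\lambda$-a.e.\ vanishing of $\nu \wedge \theta_x\nu$ with the modular function correctly accounted for is the one genuinely analytic step. Everything else reduces to monotonicity of $\wedge$, convolution, and the shift $\theta$, together with the elementary fact that a subgroup with nonempty interior is clopen and that connectedness upgrades clopen to everything.
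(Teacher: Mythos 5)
Your proof is correct, and the two halves relate to the paper differently. For the converse (spread out implies $G_s$ has nonempty interior, hence is a clopen subgroup, hence all of $G$ when $G$ is connected) you follow essentially the paper's own route: truncate to a bounded, compactly supported density, convolve once more to get a continuous density $h$ positive on a nonempty open set $V$, note that $\min\{h(\cdot),h(\cdot+x)\}\,\lambda$ is a nonzero common minorant of $\mu^{2m}$ and $\theta_x^{-1}\mu^{2m}$ for $x\in V-V$, and finish with Corollary~\ref{cor:abeliancase} and the subgroup property from Corollary~\ref{cor:couplinggroup}. (One cosmetic point: ``nonzero precisely when $V\cap(V-x)$ has positive measure'' should be ``at least when,'' since $\{h>0\}$ may be larger than $V$; this does not affect the inclusion $V-V\subseteq G_s$ you actually use.) For the forward implication your route is genuinely different. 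The paper argues by contradiction: from $G_s=G$ it extracts some $n$ with $\lambda(B_n)>0$, where $B_n=\{x:\TV{\mu^n-\theta_x^{-1}\mu^n}\leq 1\}$ (whose measurability it must defer to Corollary~\ref{cor:couplingmeasurability}), and then a Fubini computation against a $\lambda$-null carrier $N$ of $\mu^n$ yields $0\geq\tfrac12\lambda(B_n)>0$. You contrapose instead: if $\mu$ is not spread out then every $\mu^n$ is singular, your lemma shows $\{x:\nu\wedge\theta_x\nu\neq \zero\}$ is $\lambda$-null whenever $\nu\perp\lambda$, and the necessity half of Theorem~\ref{thm:mainthm} places $G_s$ inside a countable union of null sets. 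Both arguments ultimately rest on the same Tonelli computation, with the key identity $\int_G 1_N(x^{-1}y)\,\lambda(dx)=\lambda(yN^{-1})=\lambda(N^{-1})=0$, and your modular-function bookkeeping is right since left and right Haar measures share null sets. Your version sidesteps the measurability issue (joint measurability of $(x,y)\mapsto 1_N(y)1_N(x^{-1}y)$ is immediate) and buys the strictly stronger conclusion that $\lambda(G_s)=0$ for any non-spread-out $\mu$ on any locally compact $G$ --- a fact the paper records, via a separate Steinhaus argument, only in the connected Abelian case as Corollary~\ref{cor:abelianhaarzeroone}.
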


\begin{proof}
	Suppose $\RW(e,\mu)$ and $\RW(x,\mu)$ admit a successful exact coupling for all $x \in G$.
    Then for all $x \in G$, $\TV{\mu^n - \theta_x^{-1}\mu^n} \to 0$ as $n \to \infty$.
    Consequently,
    \begin{equation}
        G = \bigcup_{n =1}^\infty\{x\in G :  \TV{\mu^n - \theta_x^{-1}\mu^n}\leq 1\}.
    \end{equation}
    The measurability of the sets 
    $B_n:= \{x \in G:  \TV{\mu^n - \theta_x^{-1}\mu^n} \leq 1\}$ for $n \geq 1$
	is taken for
	granted here. This fact is proved in the upcoming
	Corollary~\ref{cor:couplingmeasurability}.
    Choose $n$ large enough that $\lambda(B_n)>0$.
    Suppose for contradiction that a Borel set $N \subseteq G$ is such that $\mu^n(N) = 1$ but $\lambda(N)=0$. 
    Then $\lambda(N^{-1})=0$ as well, and 
    \begin{align*}
        0 
        &= \int_G\int_{G} 1_{sx\in B_n}  1_{x \in N^{-1}}\, \lambda(dx)\, \mu^n(ds)\\
        &= \int_G\int_{G} 1_{x \in B_n}1_{s^{-1}x \in N^{-1}}\, \lambda(dx)\, \mu^n(ds)\\
        &= \int_G\int_{B_n} 1_{s \in x N}\, \lambda(dx)\, \mu^n(ds)\\
        &= \int_{B_n} \theta_x^{-1}\mu^n(N)\, \lambda(dx)\\
        &\geq \int_{B_n} \mu^n(N) - |\theta_x^{-1}\mu^n(N) - \mu^n(N)|\,\lambda(dx)\\
	&\geq \int_{B_n} 1 - \frac{1}{2}\TV{\theta_x^{-1}\mu^n - \mu^n}\,\lambda(dx)\\
        &\geq \frac{1}{2} \lambda(B_n) \\
        &> 0
    \end{align*}
    which is a contradiction.
    It follows that $\mu^n$ must not be singular with respect to $\lambda$, and hence $\mu$
    is spread out.

    For the other direction, suppose $G$ is Abelian
    and that $\nu:=\mu^{n} \geq \int_{\cdot} f\,d\lambda$ as stated.
    By replacing $f$ with $\min\{f,b\}1_{K}$ for some $b>0$ and $K\subseteq G$ compact, 
    one may assume $f$ is bounded and compactly supported.
    Furthermore, it is claimed that by replacing $n$ with $2n$ one may assume $f>\epsilon$ on some nonempty
    open set for some $\epsilon > 0$.
    Indeed,
    \begin{equation*}
        \mu^{2n} 
        = \nu * \nu
        \geq\int_{\cdot} f*f\,d\lambda.
    \end{equation*}
    Since $f$ is bounded and compactly supported, the convolution $f*f$ is continuous, 
    and also $\left\Vert f * f\right\Vert_{L^1} = \left\Vert f\right\Vert_{L^1}^2 > 0$, so $f$
    is not constant $0$.
    Thus the assumption that $f>\epsilon>0$ on some nonempty open set $U$ and for some $\epsilon>0$ is justified.
    In particular, 
    choosing a symmetric neighborhood $V$ of the identity such that $(U-x) \cap U \neq \emptyset$ for each $x \in V$,
    it holds that 
    \begin{equation*}
        \nu \wedge \theta_x^{-1}\nu(G) \geq \int_G \min \{f(y), f(x+y)\}\,\lambda(dy)
        \geq \int_{(U-x) \cap U} \epsilon \lambda(dy) > 0
    \end{equation*}
    for every $x \in V$.
	It follows that 
	$G_s \supseteq V$.
    By Corollary~\ref{cor:couplinggroup},
    $G_s$ is a subgroup of $G$, and thus $G_s$ is either clopen or has empty interior.
    Since $G_s$ contains the nonempty open set $V$, $G_s$ must be clopen.
    If $G$ is connected then this implies $G_s = G$.
    \qed{}
\end{proof}

The connectedness assumption in Corollary~\ref{cor:connectedspreadoutcase}
plays a nontrivial role.
For example, consider when $G$ is a countable group.
Then any choice of $\mu$ is automatically purely atomic because $G$ is countable and spread out
because the Haar measure is a counting measure.
The following corollary shows that in that case the conclusion of
Corollary~\ref{cor:connectedspreadoutcase} does not hold.
This is also part (d) of Theorem~\ref{thm:abelianproblemresolutionteaser}.

\begin{corollary}\label{cor:abeliandiscretecase}
    Suppose $G$ is Abelian and $\mu$ is purely atomic with $A$ the set of atoms of $\mu$.
	Then $G_s$ is the
	subgroup generated by $A-A$.
\end{corollary}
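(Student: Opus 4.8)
The plan is to read off $G_s$ directly from the characterization
$G_s = \{x : \exists\, n \geq 1,\ \mu^{n} \wedge \theta_x^{-1}\mu^{n} \neq \zero\}$
supplied by Corollary~\ref{cor:abeliancase}, and then to translate the wedge condition into a purely combinatorial statement about sumsets of $A$. First I would record that, since $\mu$ is purely atomic, we may write $\mu = \sum_{a \in A} p_a \delta_a$ with each $p_a > 0$, and the $n$-fold convolution $\mu^{n}$ is again purely atomic with set of atoms exactly the $n$-fold sumset $\Sigma_n := \{a_1 + \dotsb + a_n : a_i \in A\}$. Indeed $\mu^{n}(\{z\}) = \sum_{a_1 + \dotsb + a_n = z} p_{a_1}\dotsm p_{a_n}$, which is strictly positive precisely when $z \in \Sigma_n$, and $A$ is countable so this describes an atomic measure.

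Next I would unwind the wedge. For two purely atomic measures the infimum $\nu_1 \wedge \nu_2$ is itself purely atomic with $(\nu_1 \wedge \nu_2)(\{z\}) = \min\{\nu_1(\{z\}), \nu_2(\{z\})\}$, so $\nu_1 \wedge \nu_2 \neq \zero$ if and only if $\nu_1$ and $\nu_2$ share a common atom. Applying this with $\nu_1 = \mu^{n}$ and $\nu_2 = \theta_x^{-1}\mu^{n}$, and using that in additive notation $\theta_x^{-1}\mu^{n}(\{z\}) = \mu^{n}(\{z + x\})$, the condition $\mu^{n} \wedge \theta_x^{-1}\mu^{n} \neq \zero$ becomes the existence of $z$ with both $z \in \Sigma_n$ and $z + x \in \Sigma_n$. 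Writing $x = (z+x) - z$ in one direction, and choosing the common-atom location $z$ in the other, this is exactly $x \in \Sigma_n - \Sigma_n$. Hence $G_s = \bigcup_{n \geq 1}(\Sigma_n - \Sigma_n)$.

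Finally I would carry out the sumset algebra. Since
$\Sigma_n - \Sigma_n = \{\sum_{i=1}^{n}(a_i - b_i) : a_i, b_i \in A\}$
is precisely the $n$-fold sumset of $D := A - A$, the set $G_s$ is the union over $n$ of all $n$-fold sumsets of $D$. The only point requiring care, and thus the place I expect any friction, is the identification of this union with $\langle A - A\rangle$: I would note that $D$ is symmetric ($D = -D$) and contains $0$ (as $a - a$ for any $a \in A$, using $A \neq \emptyset$ since $\mu$ is a probability measure). Symmetry means no inverses need to be separately adjoined to generate the subgroup, and $0 \in D$ makes the sumsets nested, so the increasing union $\bigcup_{n}(\Sigma_n - \Sigma_n)$ is exactly the subgroup generated by $D$. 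This yields $G_s = \langle A - A\rangle$, as claimed. Beyond this bookkeeping, Corollary~\ref{cor:abeliancase} does all the analytic work, so no genuine obstacle remains.
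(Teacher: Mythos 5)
Your proposal is correct and follows essentially the same route as the paper's proof: identify the atoms of $\mu^{n}$ as the $n$-fold sumset $nA$, observe that $\mu^{n}\wedge\theta_x^{-1}\mu^{n}\neq\zero$ exactly when $x\in nA-nA=n(A-A)$, and use symmetry of $A-A$ (plus $0\in A-A$) to identify $\bigcup_n n(A-A)$ with the generated subgroup before invoking Corollary~\ref{cor:abeliancase}. The only difference is that you spell out the bookkeeping (the formula for $\theta_x^{-1}\mu^{n}$ on atoms and the nestedness of the sumsets) that the paper leaves implicit.
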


\begin{proof}
    The atoms of $\mu^{n}$ are $nA := A + \dotsb + A$.
    Then since $\mu^{n}$ is atomic, $\mu^{n} \wedge \theta_x^{-1}\mu^{n} \neq \zero$ if and only if
    $nA \cap (nA-x) \neq \emptyset$ if and only if $x \in nA - nA = n(A-A)$.
    Finally, note that $\bigcup_{n=1}^\infty n(A-A)$ is exactly the subgroup
    generated by $A-A$ since $A-A$ is symmetric.
    Corollary~\ref{cor:abeliancase} then finishes the claim.
    \qed{}
\end{proof}

The section ends by showing that, in the Abelian case, any countable subgroup can be a successful exact coupling set,
and that the Haar measure is insufficient to measure the size of $G_s$.

\begin{corollary}\label{cor:Gsanycountableset}
	Suppose $G$ is Abelian and $H$ is a countable subgroup of $G$.
	Then there is a choice of $\mu$ for which $G_s= H$.
\end{corollary}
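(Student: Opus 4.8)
The plan is to reduce immediately to Corollary~\ref{cor:abeliandiscretecase}, which states that whenever $\mu$ is purely atomic with atom set $A$, the coupling set $G_s$ equals exactly the subgroup generated by $A-A$. Thus it suffices to exhibit a purely atomic probability measure whose atoms form a set $A$ with $\langle A-A\rangle = H$, where $\langle\,\cdot\,\rangle$ denotes the generated subgroup.

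The natural choice is to take the atoms to be $H$ itself. Since $H$ is a subgroup, every difference $h-h'$ with $h,h'\in H$ lies in $H$, while every $h\in H$ equals $h-0$; hence $A-A = H-H = H$, and the subgroup generated by $H$ is $H$. So I need only build a purely atomic $\mu$ whose set of atoms is precisely $H$. Because $H$ is countable, I would enumerate $H=\{h_1,h_2,\dotsc\}$ (a finite list if $H$ is finite, with the argument unchanged), pick strictly positive weights summing to one, e.g.\ $c_i := 2^{-i}$ suitably normalized, and set $\mu := \sum_i c_i \delta_{h_i}$. This $\mu$ is a Borel probability measure since $H$, being countable, is Borel; it is purely atomic by construction; and each $h_i$ carries mass $c_i>0$ while no other point receives positive mass, so its atom set is exactly $H$.

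With $\mu$ so chosen, Corollary~\ref{cor:abeliandiscretecase} yields $G_s = \langle H-H\rangle = \langle H\rangle = H$, which is the claim. The degenerate case $H=\{0\}$ is covered as well: there $\mu=\delta_0$, and one checks directly that $\mu^n \wedge \theta_x^{-1}\mu^n = \delta_0 \wedge \delta_{-x}$ is nonzero only when $x=0$, so $G_s=\{0\}$ by Corollary~\ref{cor:abeliancase}.

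There is essentially no hard step here, since the entire content is delegated to Corollary~\ref{cor:abeliandiscretecase}; the only points needing a moment's care are verifying that the chosen $\mu$ genuinely has atom set equal to $H$ (neither larger nor smaller) and invoking the subgroup identity $H-H=H$. As an illustration of the accompanying remark that Haar measure cannot gauge the size of $G_s$, taking $G=\R$ with $H=\Q$ produces $G_s=\Q$, a dense subgroup of Lebesgue measure zero, while taking $H=\Z$ produces a discrete proper subgroup; both show that $G_s$ is not detected by the Haar measure.
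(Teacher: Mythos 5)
Your proof is correct and takes essentially the same route as the paper: choose a purely atomic $\mu$ with atom set exactly $H$, note that $H-H=H$ generates $H$, and apply Corollary~\ref{cor:abeliandiscretecase}. The extra details you supply (the explicit weights $c_i$ and the separate check for $H=\{0\}$) are fine but not needed, since the cited corollary already handles all cases uniformly.
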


\begin{proof}
    Any purely atomic $\mu$ whose set of atoms is $H$ suffices.
    If $\mu$ is as mentioned, then since the subgroup generated by $H-H$ is $H$ itself,
	one finds that $G_s = H$ by Corollary~\ref{cor:abeliandiscretecase}.
	\qed{}
\end{proof}

\begin{corollary}\label{cor:abelianhaarzeroone}
	Suppose $G$ is locally compact with Haar measure $\lambda$, and that $G$ is
	connected and Abelian as well.
	If $\mu$ is not spread out, then $\lambda(G_s) = 0$.
\end{corollary}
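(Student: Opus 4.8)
The plan is to argue by contraposition: assuming $\lambda(G_s) > 0$, I would deduce that $\mu$ must be spread out, contradicting the hypothesis. The main tool is the dichotomy that a measurable subgroup of a locally compact group is either Haar-null or open. Since $G_s$ is a subgroup by Corollary~\ref{cor:couplinggroup} and is Borel measurable (the upcoming Corollary~\ref{cor:couplingmeasurability}, whose use is already taken for granted in the proof of Corollary~\ref{cor:connectedspreadoutcase}), this dichotomy applies to $G_s$.

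Concretely, I would first record that a connected locally compact group is $\sigma$-compact: if $K$ is a compact symmetric neighborhood of $e$, then $\bigcup_{m\geq 1} K^m$ is an open subgroup, hence clopen, hence all of $G$ by connectedness, so $G=\bigcup_{m} K^m$. Consequently $\lambda$ is $\sigma$-finite and inner regular on every Borel set, so from $\lambda(G_s) > 0$ one may extract a compact $K_0 \subseteq G_s$ with $\lambda(K_0) > 0$. The Steinhaus--Weil theorem then guarantees that $K_0 K_0^{-1}$ contains a neighborhood $V$ of $e$; since $G_s$ is a subgroup, $V \subseteq K_0 K_0^{-1} \subseteq G_s$, so $G_s$ is open, and a subgroup that is open is also closed. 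Thus $G_s$ is clopen.

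Next I would invoke connectedness a second time: a nonempty clopen subset of a connected space is the whole space, and $G_s$ is nonempty as it contains $e$, so $G_s = G$. By Corollary~\ref{cor:connectedspreadoutcase}, $G_s = G$ forces $\mu$ to be spread out, which is the desired contradiction. Hence $\lambda(G_s) = 0$.

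I expect the main obstacle to be purely measure-theoretic bookkeeping rather than anything structural: one must carefully justify the extraction of a compact subset of positive measure (which is exactly where $\sigma$-compactness, hence connectedness, is genuinely used) and quote the Steinhaus--Weil theorem in the correct generality for locally compact groups. An alternative, more self-contained route avoids citing Steinhaus--Weil and instead mirrors the Fubini computation already carried out in Corollary~\ref{cor:connectedspreadoutcase}: writing $G_s = \bigcup_n C_n$ with $C_n = \{x : \mu^n \wedge \theta_x^{-1}\mu^n \neq \zero\}$ via Corollary~\ref{cor:abeliancase}, one picks $n$ with $\lambda(C_n) > 0$, assumes for contradiction that $\mu^n$ is supported on a $\lambda$-null set $N$, notes that for $x \in C_n$ the relation $\mu^n \wedge \theta_x^{-1}\mu^n \leq \mu^n$ concentrates the overlap on $N$ and forces $\theta_x^{-1}\mu^n(N) > 0$, and then derives $\int_{C_n}\theta_x^{-1}\mu^n(N)\,\lambda(dx) = 0$ exactly as in that proof, contradicting positivity of the integrand on a set of positive measure. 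Either way, connectedness enters through $\sigma$-finiteness.
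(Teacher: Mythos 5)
Your proposal is correct and follows essentially the same route as the paper: the paper likewise argues that $\lambda(G_s)>0$ implies, via the Steinhaus theorem applied to the measurable subgroup $G_s$, that $G_s$ contains a neighborhood of the identity, hence is clopen, hence equals $G$ by connectedness, whence $\mu$ is spread out by Corollary~\ref{cor:connectedspreadoutcase}. Your additional care about $\sigma$-compactness and inner regularity, and your alternative Fubini-based route, are fine elaborations but not departures from the paper's argument.
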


\begin{proof}
	The measurability of $G_s$ is proved in the upcoming
	Corollary~\ref{cor:couplingmeasurability}.
	Here it is taken for granted.
	If $\lambda(G_s)>0$, then $G_s = G_s-G_s$ contains a neighborhood of the
    identity by the Steinhaus Theorem~\cite{stromberg1972elementary}.
	In this case it follows as in the proof of
	Corollary~\ref{cor:connectedspreadoutcase} that $G_s=G$, which implies
	that $\mu$ is spread out by the same corollary.
	\qed{}
\end{proof}

\section{Properties of The Successful Exact Coupling Set}\label{sec:structureofcouplingset}

The primary goal of this section is to treat the measurability issues previously neglected.
In the Abelian case, the successful exact coupling set is Borel measurable.
To show this, a slight but natural extension of Exercise 6.10.72\ in~\cite{bogachev2007measure2},
is required.
The following gives the existence of a measurable choice of a family of Radon-Nikodym derivatives.
Importantly, the following does not assume absolute continuity and instead produces
Radon-Nikodym derivatives of the absolutely continuous parts of measures.

\begin{proposition}\label{prop:acpart}
    Let $(X,\A,\mu)$ be a finite measure space with $\A$ countably generated, and 
    let $(T,\B)$ be a measurable space.
    Let $\{\mu_t\}_{t \in T}$ be any family of finite measures on $X$ such that
    for each $A \in \A$,  the function $t \mapsto \mu_t(A)$ is $\B$-measurable.
    Then there is an $\A \otimes \B$-measurable $f:X\times T \to \R$ such that
    for every $t \in T$, $x\mapsto f(x,t)$ is a version of the Radon-Nikodym derivative of
    the absolutely continuous part of $\mu_t$ with respect to $\mu$.
\end{proposition}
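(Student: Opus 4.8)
The plan is to realize the family of densities as a single, uniformly-defined almost-everywhere limit of elementary conditional densities attached to an increasing sequence of finite sub-$\sigma$-algebras, and then to observe that joint measurability survives every stage of this construction. The single-measure statement---that such conditional densities converge $\mu$-a.e.\ to the Radon--Nikodym derivative of the \emph{absolutely continuous part} of a measure, with no absolute continuity assumed---is the classical martingale form of the Lebesgue--Radon--Nikodym theorem underlying Exercise~6.10.72 in \cite{bogachev2007measure2}. The only genuinely new ingredient is that the hypothesis ``$t\mapsto\mu_t(A)$ is $\B$-measurable for every $A\in\A$'' is exactly what lets the $t$-dependence be threaded through jointly measurably.

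First I would fix a countable generating family $\{A_k\}_{k=1}^\infty$ for $\A$ and set $\A_n:=\sigma(A_1,\dots,A_n)$, a finite $\sigma$-algebra whose atoms form a finite partition $\mathcal P_n$ of $X$; then $\A_n$ increases and $\sigma\bigl(\bigcup_n\A_n\bigr)=\A$. For each $n$ I would define the elementary density
\begin{equation*}
	f_n(x,t):=\sum_{P\in\mathcal P_n,\ \mu(P)>0}\frac{\mu_t(P)}{\mu(P)}\,1_P(x).
\end{equation*}
This is a finite sum in which, for each atom $P$, the indicator $1_P$ is $\A$-measurable in $x$, the number $\mu(P)$ is a positive constant, and $t\mapsto\mu_t(P)$ is $\B$-measurable by hypothesis (this is the one place the hypothesis enters, since $P\in\A_n\subseteq\A$). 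Hence each $f_n$ is $\A\otimes\B$-measurable.

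Next, for each fixed $t$ I would recognize $\{f_n(\cdot,t)\}_n$ as a nonnegative $\mu$-supermartingale relative to $\{\A_n\}_n$: for an atom $Q\in\mathcal P_m$ with $\mu(Q)>0$ and $m\le n$, one has $\int_Q f_n(\cdot,t)\,d\mu=\sum_{P\subseteq Q,\ \mu(P)>0}\mu_t(P)\le\mu_t(Q)=\int_Q f_m(\cdot,t)\,d\mu$. By the supermartingale convergence theorem $f_n(\cdot,t)$ converges $\mu$-a.e., and by the classical identification of the limit it converges to $g_t:=d\mu_t^{\mathrm{ac}}/d\mu$, the derivative of the absolutely continuous part (on the $\mu$-null support of the singular part the sequence may run off to $+\infty$, which is precisely why only the absolutely continuous part is recovered). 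To obtain a single finite-valued jointly measurable $f$, I would put $E:=\{(x,t):\liminf_n f_n(x,t)=\limsup_n f_n(x,t)<\infty\}$, which is $\A\otimes\B$-measurable since $\liminf$ and $\limsup$ of jointly measurable functions are jointly measurable, and set
\begin{equation*}
	f(x,t):=\begin{cases}\limsup_n f_n(x,t), & (x,t)\in E,\\ 0, & (x,t)\notin E.\end{cases}
\end{equation*}
Then $f$ is $\A\otimes\B$-measurable and real-valued, and for each fixed $t$ the section $\{x:(x,t)\in E\}$ is $\mu$-conull (as $g_t\in L^1(\mu)$ is $\mu$-a.e.\ finite), so $f(\cdot,t)=g_t$ holds $\mu$-a.e.; that is, $x\mapsto f(x,t)$ is a version of $d\mu_t^{\mathrm{ac}}/d\mu$, as required.

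The main obstacle is conceptual rather than computational: one must avoid any per-$t$ selection of a density (which would wreck joint measurability) and instead build every density by one uniform limiting procedure, then check simultaneously that the procedure is jointly measurable at each finite stage, convergent for $\mu$-a.e.\ $x$ at every fixed $t$, and correctly isolating the absolutely continuous derivative rather than the full measure. The measurability hypothesis on $t\mapsto\mu_t(A)$ is exactly what secures the first of these, while the latter two are inherited from the classical single-measure result.
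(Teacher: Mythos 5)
Your proof is correct, but it takes a genuinely different route from the paper's. The paper first reduces to $X=[0,1]$ via a generating map $\phi:X\to[0,1]$ with $\A=\{\phi^{-1}(B):B\in\B([0,1])\}$, applies the differentiation-of-measures theorem (ratios of measures of shrinking balls converge $\mu$-a.e.\ to the density of the absolutely continuous part), proves joint measurability of $(x,t)\mapsto\mu_t(B(x,\epsilon_n))$ by approximating the indicator of the ball with product simple functions, and then pulls the result back to $X$, which forces a careful verification that Lebesgue decompositions commute with the push-forward by $\phi$. You instead work directly on $(X,\A)$ with the refining finite partitions $\mathcal P_n$ coming from a countable generating family, observe that the elementary conditional densities $f_n(\cdot,t)$ form a nonnegative $\mu$-supermartingale (your verification $\sum_{P\subseteq Q,\,\mu(P)>0}\mu_t(P)\le\mu_t(Q)$ is right, and the $\mu$-null atoms cause no trouble), and invoke the classical martingale identification of the limit as $d\mu_{t,a}/d\mu$; joint measurability is then immediate at every finite stage because each $f_n$ is a finite sum of products of an $\A$-measurable function of $x$ and a $\B$-measurable function of $t$. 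Your approach buys a cleaner measurability argument and avoids both the transfer to $[0,1]$ and the push-forward bookkeeping; its one external input is the Andersen--Jessen/Doob-type theorem that the conditional densities along $\A_n\uparrow\A$ converge $\mu$-a.e.\ to the derivative of the absolutely continuous part (with the singular part contributing a limit of $0$ $\mu$-a.e.), which you correctly flag as the classical content behind the Bogachev exercise but should cite precisely in a final write-up. The paper's approach, by contrast, outsources the analytic core to a covering/differentiation theorem on $[0,1]$ and pays for it with the isomorphism reduction. Both are complete; yours is arguably the more self-contained of the two.
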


\begin{proof}
    First consider $X:=[0,1]$ and $\A := \B([0,1])$, the Borel sets on $[0,1]$.
    Fix a sequence $\{\epsilon_n\}_{n=0}^\infty$ with $\epsilon_n \searrow 0$.
    For every $t\in T$,
    \begin{equation}\label{eq:generalradonnikodym}
        \lim_{n} \frac{\mu_t(B(x,\epsilon_n))}{\mu(B(x,\epsilon_n))} = \frac{d\mu_{t,a}}{d\mu}(x),\qquad \mu\text{-a.e.\ $x$},
    \end{equation}
    where $\mu_{t,a}$ denotes the absolutely continuous part of $\mu_t$ with respect to $\mu$.
    This follows from, e.g., Theorem 5.8.8.\ in~\cite{bogachev2007measure1}.
    Define
    \begin{equation}
        f(x,t) := \limsup_n \frac{\mu_t(B(x,\epsilon_n))}{\mu(B(x,\epsilon_n))}
    \end{equation}
    for $x \in \supp{\mu}$ and $t \in T$, and $f(x,t) := 0$ otherwise.
    By \eqref{eq:generalradonnikodym}, it suffices to show $f$ is $\A \otimes \B$-measurable.
    Indeed, consider a fixed $n$ and consider the numerator 
    \begin{equation*}
        (x,t) \mapsto \mu_t(B(x,\epsilon_n))= \int_{[0,1]} 1_{|y-x|<\epsilon_n}\,\mu_t(dy).
    \end{equation*}
    Let $g(x,y):= 1_{|y-x|<\epsilon_n}$ and choose a sequence of measurable simple functions
    $\{s_k\}_{k=0}^\infty$ of the form 
    \begin{equation}
        s_k(x,y) := \sum_{i=0}^{m_k} \alpha_{i,k} 1_{x \in A_{i,k}} 1_{y \in B_{i,k}},
    \end{equation}
    with $0 \leq s_k \leq 1$ and $A_{i,k},B_{i,k} \in \B([0,1])$ for each $k$, and $s_k \to g$ as $k\to\infty$.
    Then
    \begin{equation*}
        \int_{[0,1]} 1_{|y-x|<\epsilon_n}\,\mu_t(dy) 
        = \lim_k \sum_{i=0}^{m_k} \alpha_{i,k} 1_{x \in A_{i,k}} \mu_t(B_{i,k}),
    \end{equation*}
    which shows $(x,t) \mapsto \mu_t(B(x,\epsilon_n))$ is a limit of $\A\otimes \B$-measurable
    functions, showing its measurability.
    The argument for the denominator $(x,t)\mapsto \mu(B(x,\epsilon_n))$ is similar and easier.
    It follows that $f$ is $\A\otimes \B$-measurable.

    Next, consider a general $X$ and $\A$.
    Since $\A$ is countably generated, choose an $\A$-measurable $\phi:X \to [0,1]$
    such that $\A = \{\phi^{-1}(B): B \in \B([0,1]) \}$, cf.\ Theorem 6.5.5 in \cite{bogachev2007measure2}.
    Also set 
    \begin{equation}
        \nu := \mu(\phi \in \cdot),\qquad \nu_t := \mu_t(\phi \in \cdot),
    \end{equation}
    for each $t \in T$.
    For each $B \in \B([0,1])$, it holds that $A := \phi^{-1}(B) \in \A$ and $t \mapsto \nu_t(B) = \mu_t(A)$
    is $\B$-measurable.
    By the case where $X=[0,1]$ and $\A=\B([0,1])$, choose $f:[0,1]\times T\to \R$
    that is $\B([0,1])\otimes \B$-measurable and such that for all $t \in T$,
    $f(\cdot,t)$ is a version of the Radon-Nikodym derivative of the absolutely continuous
    part of $\nu_t$ with respect to $\nu$.
    Define $f_0:X\times T\to \R$ by $f_0(x,t) := f(\phi(x),t)$.
    Then $f_0$ is $\A\otimes \B$-measurable.
    Fix $t \in T$ and let $A \in \A$ be given.
    Choose $B \in \B([0,1])$ with $A = \phi^{-1}(B)$.
    Then
    \begin{align*}
        \int_X 1_{x \in A} f_0(x,t)\,\mu(dx) 
        &= \int_X 1_{\phi(x) \in B} f(\phi(x),t)\,\mu(dx)\\
        &= \int_{[0,1]} 1_{y \in B} f(y,t)\,\nu(dy)\\
        &= \nu_{t,a}(B)\\
        &= \mu_{t,a}(A).
    \end{align*}
    Some care should be taken in the last equality, where it is used that 
    the absolutely continuous part of $\mu_t(\phi \in \cdot)$ with respect to $\mu(\phi \in \cdot)$
    is the same as the push-forward with respect to $\phi$ of the absolutely continuous part
    of $\mu_t$ with respect to $\mu$.
    Write 
    \begin{equation*}
        \nu_t = \nu_{t,a} + \nu_{t,s},\qquad \mu_t = \mu_{t,a} + \mu_{t,s},
    \end{equation*}
    with $\nu_{t,a} \ll \nu$ and $\nu_{t,s} \perp \nu$, and $\mu_{t,a} \ll \mu$ and $\mu_{t,s} \perp \mu$.
    Then also 
    \begin{equation*}
        \nu_t = \mu_t(\phi \in \cdot) = \mu_{t,a}(\phi \in \cdot) + \mu_{t,s}(\phi \in \cdot),
    \end{equation*}
    so it suffices to show by the uniqueness of Lebesgue decompositions
    that 
    \begin{equation*}
        \mu_{t,a}(\phi \in \cdot) \ll \nu \text{ and } \mu_{t,s}(\phi \in \cdot) \perp \nu.
    \end{equation*}
    Indeed, if $B\in\B([0,1])$ is such that $0=\nu(B)=\mu(\phi \in B)$, 
    then  $\mu_{t,a}(\phi \in \cdot) = 0$
    because $\mu_{t,a} \ll \mu$.
    Thus $\mu_{t,a}(\phi \in \cdot) \ll \nu$.
    Similarly, choose $A \in \A$ such that $\mu_{t,s}(A^c) = \mu(A) = 0$.
    Choose $B\in \B([0,1])$ with $A = \phi^{-1}(B)$, then compute
    $\mu_{t,s}(\phi \in B^c) = \mu_{t,s}(A^c) = 0$ and $\nu(B) = \mu(\phi \in B)
    = \mu(A) = 0$, so that $\mu_{t,s}(\phi \in \cdot) \perp \nu$.
    The previous use of $\nu_{t,a}(B) = \mu_{t,a}(A)$ is now justified,
    showing that $f_0(\cdot, t)$ is a version of
    the Radon-Nikodym derivative of the absolutely continuous part of $\mu_t$ with respect to $\mu$,
    completing the claim.
    \qed{}
\end{proof}

The following, together with Corollary~\ref{cor:couplinggroup}, gives Theorem~\ref{thm:abelianproblemresolutionteaser} part (e).

\begin{corollary}\label{cor:couplingmeasurability}
    For a probability measure $\nu$ on $G$,
    the maps $x \mapsto \TV{\nu - \theta_x^{-1}\nu}$, $x\mapsto \TV{\nu \wedge \theta_x^{-1}\nu}$, and
    the set $\{x : \nu \wedge \theta_x^{-1}\nu\neq \zero\}$
    are Borel measurable. 
	In particular, if $G$ is Abelian then $G_s=\cup_{n=1}^\infty \{x \in G:
	\mu^n \wedge \theta_x^{-1}\mu^n \neq \zero\}$
    is Borel measurable.
\end{corollary}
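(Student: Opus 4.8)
The plan is to reduce all three measurability claims to the single statement that the map $x \mapsto (\nu \wedge \theta_x^{-1}\nu)(G)$ is Borel measurable, and then to produce this measurability from the jointly measurable Radon--Nikodym derivatives furnished by Proposition~\ref{prop:acpart}. First I would record the elementary identities tying the three objects together. Since $\theta_x^{-1}\nu$ is again a probability measure, setting $m(x) := (\nu \wedge \theta_x^{-1}\nu)(G)$ one has $\TV{\nu \wedge \theta_x^{-1}\nu} = m(x)$ because the measure is nonnegative, and by the standard relation $\TV{\nu_1 - \nu_2} = \nu_1(G) + \nu_2(G) - 2(\nu_1 \wedge \nu_2)(G)$ for finite measures, $\TV{\nu - \theta_x^{-1}\nu} = 2 - 2m(x)$. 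Moreover $\{x : \nu \wedge \theta_x^{-1}\nu \neq \zero\} = \{x : m(x) > 0\} = m^{-1}((0,\infty))$. Hence once $m$ is shown to be Borel, all three claims follow at once, and the final claim about $G_s$ follows from Corollary~\ref{cor:abeliancase}, which exhibits $G_s = \bigcup_{n=1}^\infty \{x : \mu^n \wedge \theta_x^{-1}\mu^n \neq \zero\}$ as a countable union of sets, each Borel by the $\nu = \mu^n$ instance.

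To show $m$ is Borel I would invoke Proposition~\ref{prop:acpart} with the finite measure space $(G, \B(G), \nu)$ and parameter space $(G, \B(G))$, applied to the family $\{\theta_x^{-1}\nu\}_{x \in G}$. Two hypotheses must be checked. The $\sigma$-algebra $\B(G)$ is countably generated because $G$, being Polish, is second countable. And for each Borel $A$ the map $x \mapsto \theta_x^{-1}\nu(A) = \nu(xA) = \int_G 1_A(x^{-1}y)\,\nu(dy)$ is Borel: the map $(x,y) \mapsto x^{-1}y$ is continuous, so $(x,y) \mapsto 1_A(x^{-1}y)$ is $\B(G) \otimes \B(G)$-measurable and bounded, whence its integral against the finite measure $\nu$ is a measurable function of $x$ by the simple-function approximation used in the proof of Proposition~\ref{prop:acpart}. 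Proposition~\ref{prop:acpart} then yields a $\B(G) \otimes \B(G)$-measurable $f : G \times G \to \R$ such that for each fixed $x$, $y \mapsto f(y,x)$ is a version of the Radon--Nikodym derivative of the absolutely continuous part of $\theta_x^{-1}\nu$ with respect to $\nu$.

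It then remains to express $m$ in terms of $f$ and conclude. Writing the Lebesgue decomposition $\theta_x^{-1}\nu = (\theta_x^{-1}\nu)_a + (\theta_x^{-1}\nu)_s$ relative to $\nu$, the singular part is carried by a $\nu$-null set and so contributes nothing to the minimum with $\nu$; since $\nu$ has density $1$ with respect to itself and $(\theta_x^{-1}\nu)_a$ has density $f(\cdot, x)$, the minimum measure has density $\min\{1, f(\cdot, x)\}$, giving
\begin{equation*}
    m(x) = (\nu \wedge \theta_x^{-1}\nu)(G) = \int_G \min\{1, f(y,x)\}\,\nu(dy).
\end{equation*}
The integrand $(y,x) \mapsto \min\{1, f(y,x)\}$ is $\B(G) \otimes \B(G)$-measurable and bounded, so by the same argument $m$ is Borel measurable, completing the reduction.

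The main obstacle is the absence of any absolute continuity between $\nu$ and its shifts: in general $\theta_x^{-1}\nu$ need be neither equivalent to nor dominated by $\nu$, so one cannot simply differentiate the shifted measures. This is precisely what Proposition~\ref{prop:acpart} is designed to handle, producing measurable densities of only the absolutely continuous parts; the one point requiring genuine care is verifying that the singular part makes no contribution to the minimum, so that $m(x)$ is faithfully captured by the density $f(\cdot, x)$ of the absolutely continuous part alone.
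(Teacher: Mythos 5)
Your proof is correct and follows essentially the same route as the paper: both apply Proposition~\ref{prop:acpart} to the family $\{\theta_x^{-1}\nu\}_{x\in G}$, express $\TV{\nu\wedge\theta_x^{-1}\nu}$ as $\int_G\min\{1,f(y,x)\}\,\nu(dy)$ after checking the singular part contributes nothing, and read off the other two claims from the identities $\TV{\nu-\theta_x^{-1}\nu}=2-2\TV{\nu\wedge\theta_x^{-1}\nu}$ and $\{x:\nu\wedge\theta_x^{-1}\nu\neq\zero\}=\{x:m(x)>0\}$. The only (immaterial) divergence is in verifying measurability of $x\mapsto\theta_x^{-1}\nu(A)$: you use joint measurability of $(x,y)\mapsto 1_A(x^{-1}y)$ plus a Tonelli-type argument for all Borel $A$ at once, whereas the paper proves semicontinuity for open $A$ via Fatou's lemma and then runs a monotone class argument; both are valid.
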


\begin{proof}
    Apply Proposition~\ref{prop:acpart} with $X:=T:=G$ and the family of measures
    $\nu_t := \theta_t^{-1}\nu$ for $t \in G$.
    For $A \subseteq G$ open and $t_n \to t \in G$, Fatou's lemma implies that
    \begin{align*}
        \nu_t(A) &= \int_G 1_{x \in tA}\,\nu(dx)\\
        &= \int_G 1_{t^{-1} \in Ax^{-1}}\,\nu(dx)\\
        &\leq \int_G \liminf_n 1_{t_n^{-1} \in Ax^{-1}}\,\nu(dx) \\
        &\leq \liminf_n \int_G 1_{t_n^{-1} \in Ax^{-1}}\,\nu(dx)\\
        &= \liminf_n \nu_{t_n}(A),
    \end{align*}
    so that $t \mapsto \nu_t(A)$ is semicontinuous and hence measurable.
    A monotone class argument shows that $t \mapsto \nu_t(A)$ is measurable for all Borel $A\subseteq G$.
    Thus, Proposition~\ref{prop:acpart} gives a measurable $f:G\times G \to \R$ such that for every $t \in G$,
    $x\mapsto f(x,t)$ is a version of the Radon-Nikodym derivative of the absolutely continuous part
    of $\theta_t^{-1}\nu$
    with respect to $\nu$.
    It follows that 
    \begin{equation}
        M(t):= \int_G \min\{f(x,t),1\}\,\nu(dx)= \TV{\nu \wedge \theta_t^{-1}\nu}
    \end{equation}
    is measurable in $t$.
    Hence
    \begin{equation}
        \TV{\nu - \theta_t^{-1}\nu} = 2 - 2\TV{\nu \wedge \theta_t^{-1}\nu}
    \end{equation} 
    is measurable in $t$, and
    \begin{equation}
        \{t : \nu \wedge \theta_t^{-1}\nu \neq \zero\} = \{t : M(t)>0\}
    \end{equation}
    is measurable as well.
    \qed{}
\end{proof}

It is not known to the author in the non-Abelian case whether $G_s$ is measurable.
Even in the Abelian case though, little is known
about other structural properties of $G_s$.
When is $G_s$ nicer than Borel
measurable?
The worst case seen so far in Corollary~\ref{cor:Gsanycountableset} is that $G_s$ may be any countable subgroup of $G$, which gives cases where
$G_s$ is an $F_\sigma$ set but not closed (e.g.\ $\Q \subseteq \R$).
Depending on $G$, this also gives cases where $G_s$ is dense (e.g.\ $\Q \subseteq \R$),
infinite but not dense (e.g.\ $\Z\subseteq \R$), and
finite but not trivial (e.g.\ $\{-1,1\} \subseteq \R\setminus\{0\}$).
Corollary~\ref{cor:abelianhaarzeroone} indicates that in many cases
either $G_s=G$ or $\lambda(G_s)=0$, so in these cases the Haar measure on $G$ is not useful to measure the size of $G_s$.
Is there a natural measure with which to measure the size of $G_s$?
What is the Hausdorff dimension of $G_s$, and can it be related, say, to the
Hausdorff dimension of the subgroup generated by $\supp \mu$?
All of these questions remain open and are not investigated further here.

\section{Possible Exact Coupling}\label{sec:possibleexactcoupling}

In this section, a weaker notion of exact coupling is studied.
Suppose that $(S, S^x, T)$ is an exact coupling of $\RW(e,\mu)$ and
$\RW(x,\mu)$.
If $\P(T < \infty) > 0$, then $(S, S^x, T)$ is called a \textdefn{possible
exact coupling}.
The difference between possible exact coupling and successful exact coupling is that a possible exact coupling only requires
$T<\infty$ with positive probability, whereas a successful exact coupling would require $T<\infty$ a.s.

\begin{definition}
	Define the \textdefn{possible exact coupling set} $G_p$ to be the subset of
	all $x \in G$ such that there exists a possible exact coupling of
	$\RW(e,\mu)$ and $\RW(x,\mu)$.
\end{definition}

Carefully looking over the proofs in Section~\ref{sec:mainthm} reveals that in many places,
the fact that a coupling time $T$ satisfies $T<\infty$ a.s.\ is used only 
to guarantee that $\P(T = n)>0$ for some $n$, allowing 
the same proofs work for possible exact couplings as well.
In particular, the following variations on Proposition~\ref{prop:existence} and Theorem~\ref{thm:mainthm}
hold without the need for any kind of assumption about the existence of large sets that commute with $x$.

\begin{proposition}\label{prop:possibleexactcouplingexistence}
	Fix $x \in G$ and suppose that $n \geq 1$ is such that
    $\mu^{n} \geq \nu + \theta_x^{-1}\nu$ for a nonzero measure $\nu$.
	Then $x \in G_p$ and there exists a possible exact coupling of $\RW(e,\mu)$ and
	$\RW(x,\mu)$ with a coupling time
    $T$ satisfying $\P(T = n) = \nu(G)$.
\end{proposition}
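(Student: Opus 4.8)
The plan is to follow the construction in the proof of Proposition~\ref{prop:existence}, but to apply the splitting only to the \emph{first} block of length $n$ and to discard entirely the difference-process and recurrence argument. The key observation is that a possible exact coupling demands only $\P(T<\infty)>0$, so it suffices to force the two walks to agree at time $n$ on an event of probability $\nu(G)$. There is no need to iterate the splitting across blocks or to invoke recurrence of a lazy walk on $\langle x\rangle$, and consequently no commutativity hypothesis on $x$ is required.

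First I would start with a space $(\Omega,\F,\P)$ carrying $S\sim\RW(e,\mu)$ with i.i.d.\ step-lengths $\{X_i\}_{i=1}^\infty$ and set $L:=X_1\dotsm X_n$, so that $L\sim\mu^n$ and $\P(L\in\cdot)\geq\nu+\theta_x^{-1}\nu$. Applying the Splitting Theorem (Theorem~\ref{thm:splitting}) to this single block, introduce $K\in\{0,1,2\}$ with $\P(L\in\cdot,K=1)=\nu$ and $\P(L\in\cdot,K=2)=\theta_x^{-1}\nu$, and define the modified first block
\[
L' := \begin{cases} L, & K=0,\\ x^{-1}L, & K=1,\\ xL, & K=2.\end{cases}
\]
The splitting identities give $L'\sim\mu^n$ exactly as in Proposition~\ref{prop:existence}. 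The essential point is that on $\{K=1\}$ one has $xL'=x\,x^{-1}L=L$, so a walk started at $x$ whose first block is $L'$ lands at $S_n$ after $n$ steps, with no commutation property of $x$ invoked anywhere.

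Next, unpack $L'$ into genuine step-lengths via the Transfer Theorem (Theorem~\ref{thm:transfer}): extend the space to accommodate i.i.d.\ $\mu$-distributed $X_1'',\dotsc,X_n''$ with $X_1''\dotsm X_n''=L'$. Then define the steps of $S^x$ by $X_i':=X_i''$ for $1\leq i\leq n$ and $X_i':=X_i$ for $i>n$, and let $S^x$ be the walk started at $x$ with these steps. Since $\{X_i''\}_{i=1}^n$ is independent of $\{X_i\}_{i>n}$, the full sequence $\{X_i'\}$ is i.i.d.\ $\mu$, whence $S^x\sim\RW(x,\mu)$. On $\{K=1\}$ one has $S^x_n=xL'=S_n$, and $S^x_m=S_m$ for all $m\geq n$ because $S^x$ copies the steps of $S$ past time $n$. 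Setting $T:=n$ on $\{K=1\}$ and $T:=\infty$ otherwise produces a coupling time with $\P(T=n)=\P(K=1)=\nu(G)>0$, so $(S,S^x,T)$ is a possible exact coupling and $x\in G_p$.

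The step I expect to require the most care is verifying that $\{X_i'\}$ really is i.i.d.\ $\mu$, i.e.\ that the transfer-generated steps $X_1'',\dotsc,X_n''$ are independent of the tail $\{X_i\}_{i>n}$. This holds because $K$ is conditionally independent of the original space given $L$ and is therefore independent of the tail, while the transfer randomness producing $X_i''$ from $L'$ is fresh; hence the first $n$ modified steps, being a function of $L'=L'(L,K)$ and this fresh randomness, are independent of $\{X_i\}_{i>n}$. Aside from this bookkeeping, the argument is a direct transcription of the construction in Proposition~\ref{prop:existence} with the iteration and recurrence deleted, which is precisely what removes the need for a large commuting set.
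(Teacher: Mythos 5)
Your proof is correct, and it takes a genuinely (if modestly) different route from the paper. The paper's proof is a one-paragraph remark: it reuses the \emph{entire} construction of Proposition~\ref{prop:existence} verbatim --- iterated splitting over every block $L_i$, the modified blocks $L_i'$, and the meeting time $M$ of $R$ and $R'$ --- and simply observes that the commutativity hypothesis was used only to identify $R'R^{-1}$ as a lazy walk on $\langle x\rangle$ and hence to conclude $M<\infty$ a.s.; dropping it, the same $T=Mn$ still satisfies $\P(T=n)=\nu(G)$ but is no longer guaranteed finite. You instead truncate the construction to the first block only, set $T:=n$ on $\{K=1\}$ and $T:=\infty$ otherwise, and discard the difference process and recurrence discussion entirely. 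This is a legitimate and self-contained argument: the identity $xL'=xx^{-1}L=L$ on $\{K=1\}$ uses no commutativity, your verification that $L'\sim\mu^n$ is the same elementary computation as in the paper, and your independence bookkeeping (that $K$ and the transfer randomness are conditionally independent of the original space given $L$, resp.\ $L'$, hence the modified first $n$ steps are independent of $\{X_i\}_{i>n}$) is exactly the point that needs care and is handled correctly. What each approach buys: the paper's version is shorter to state given that Proposition~\ref{prop:existence} is already proved, and its coupling time can be finite on a larger event than $\{K_1=1\}$ (the walks may still merge at later blocks by chance), whereas yours achieves $\P(T=n)=\nu(G)$ and $\P(T<\infty)=\nu(G)$ exactly with strictly less machinery --- no recurrence, no cyclic group, no iteration. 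Both establish the proposition as stated.
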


\begin{proof}
    In the proof of Proposition~\ref{prop:existence}, the only place where the assumption that
	there exists a $B$ with $\mu^{n}(B)=1$ such that $x$ commutes with all of $B$
    is needed is to show that the constructed coupling time $T$ is a.s.\ finite and $T/n$
    looks like a hitting time of a random walk.
    When this assumption is not met, the coupling from that proof still works, and the coupling time $T$ 
    still satisfies $\P(T=n)=\nu(G)$, but not necessarily $\P(T<\infty)=1$, and $T/n$ does not necessarily look
    like a hitting time of a random walk on $\langle x\rangle$.
    \qed{}
\end{proof}

\begin{theorem}\label{thm:possibleexactcouplingmainthm}
	For all $x \in G$, there exists a possible exact coupling of $\RW(e,\mu)$
	and $\RW(x,\mu)$ with a coupling time $T$ satisfying $\P(T=n)>0$
	if and only if
    $\mu^{n}\wedge \theta_x^{-1}\mu^{n} \neq \zero$.
	In particular, $G_p =\{ x \in G: \exists n \geq 1, \mu^{n}\wedge
	\theta_x^{-1}\mu^{n} \neq \zero\}$.
\end{theorem}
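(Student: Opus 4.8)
The plan is to establish the biconditional for each fixed $n \geq 1$ and then read off the description of $G_p$ as a countable union over $n$. Both implications track the proof of Theorem~\ref{thm:mainthm} almost verbatim, and the only conceptual point is to observe two reductions: that the forward half of that proof never used the almost-sure finiteness of the coupling time, and that the commutativity hypothesis in its converse half entered solely through the appeal to Proposition~\ref{prop:existence}, which I now replace by its commutativity-free counterpart Proposition~\ref{prop:possibleexactcouplingexistence}.

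For the direction asserting that a possible exact coupling $(S, S^x, T)$ with $\P(T=n)>0$ forces $\mu^n \wedge \theta_x^{-1}\mu^n \neq \zero$, I would reproduce the chain of measure comparisons opening the proof of Theorem~\ref{thm:mainthm}. On the event $\{T=n\}$ the coupling property gives $S_n = S^x_n$, so $\zero \neq \P(T=n,\, S_n = S^x_n \in \cdot) \leq \P(S_n \in \cdot) \wedge \P(S^x_n \in \cdot) = \mu^n \wedge \theta_x\mu^n$, and applying $\theta_x^{-1}$ to both sides gives the conclusion. The sole hypothesis consumed is $\P(T=n)>0$, precisely what the definition of a possible exact coupling with this property provides; whether $T$ is finite almost surely is irrelevant.

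For the converse I would assume $\mu^n \wedge \theta_x^{-1}\mu^n \neq \zero$ and produce a nonzero $\nu$ with $\mu^n \geq \nu + \theta_x^{-1}\nu$, after which Proposition~\ref{prop:possibleexactcouplingexistence} delivers a possible exact coupling with $\P(T=n)=\nu(G)>0$. When $x=e$ the choice $\nu := \tfrac12\mu^n$ works immediately. When $x \neq e$ I would rerun the neighborhood construction from the converse half of Theorem~\ref{thm:mainthm}: set $\xi := \mu^n \wedge \theta_x^{-1}\mu^n$, pick $y \in \supp\xi$ and a neighborhood $U$ of $y$ with $U \cap xU = \emptyset$ (available since $y \neq xy$), and put $\nu := \xi((x^{-1}\cdot)\cap U)$. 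The verification that $\nu \neq \zero$ and $\nu + \theta_x^{-1}\nu \leq \mu^n$ is identical to the one given there and uses only the disjointness of $U$ and $xU$, never any commutation of $x$ with the support of $\mu^n$.

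Finally, to obtain the description of $G_p$ I would note that $x \in G_p$ exactly when some exact coupling satisfies $\P(T<\infty)>0$. Writing $\{T<\infty\} = \bigcup_{n\geq 0}\{T=n\}$, positivity of $\P(T<\infty)$ is equivalent to $\P(T=n)>0$ for some $n$; the degenerate case $n=0$ forces $x=e$, for which $\mu^m \wedge \theta_x^{-1}\mu^m = \mu^m \neq \zero$ for every $m\geq 1$ anyway, so in all cases membership in $G_p$ is equivalent to $\mu^n\wedge\theta_x^{-1}\mu^n\neq\zero$ for some $n\geq 1$ by the biconditional. I do not expect a genuine obstacle: the entire content is the recognition that relaxing ``$T$ finite almost surely'' to ``$T$ finite with positive probability'' is exactly the slack that lets Proposition~\ref{prop:possibleexactcouplingexistence} operate without the commutativity assumption of Proposition~\ref{prop:existence}.
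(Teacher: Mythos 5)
Your proposal is correct and follows exactly the route the paper takes: the paper's proof of this theorem simply says to rerun the argument of Theorem~\ref{thm:mainthm} with Proposition~\ref{prop:possibleexactcouplingexistence} in place of Proposition~\ref{prop:existence}, which is precisely what you have spelled out. Your explicit observations---that the forward direction only ever used $\P(T=n)>0$ for some $n$, and that the neighborhood construction in the converse never invokes commutativity---are the same two reductions the paper relies on implicitly.
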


\begin{proof}
    The proof is nearly identical to that of Theorem~\ref{thm:mainthm}, 
    except one appeals to Proposition~\ref{prop:possibleexactcouplingexistence} to construct a possible exact coupling instead of Proposition~\ref{prop:existence}.
    \qed{}
\end{proof}

One may now reap some low-hanging fruit. In particular, it is shown that the possible exact coupling set is Borel measurable,
that in the Abelian case admitting a possible exact coupling and admitting a successful exact coupling are the same, and that if an $n$-fold convolution of a measure
overlaps with one of its shifts, then all higher-fold convolutions of the measure admit the same property.

\begin{corollary}\label{cor:possibleexactcouplingsetmeasurable}
	$G_p$ is Borel measurable.
\end{corollary}

\begin{proof}
    The set in question, by Theorem~\ref{thm:possibleexactcouplingmainthm}, equals
    $\bigcup_{n=1}^\infty \{y: \mu^{n} \wedge \theta_y^{-1}\mu^{n} \neq \zero\}$,
    which is Borel measurable by Corollary~\ref{cor:couplingmeasurability}.
    \qed{}
\end{proof}

\begin{corollary}
    Suppose $G$ is Abelian.
    Then $G_p = G_s$.
\end{corollary}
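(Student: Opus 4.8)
The plan is to observe that, in the Abelian case, both coupling sets admit the \emph{same} explicit description, so the equality follows immediately once the two characterizing conditions are placed side by side. No new construction is needed; the corollary is a comparison of formulas already established.

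First I would record the trivial inclusion $G_s \subseteq G_p$, which in fact holds for arbitrary $G$ and needs no commutativity. Indeed, any successful exact coupling $(S, S^x, T)$ has $T < \infty$ almost surely, hence $\P(T < \infty) = 1 > 0$, which is exactly the defining requirement of a possible exact coupling. So every $x$ admitting a successful exact coupling admits a possible one.

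The substance is the reverse inclusion $G_p \subseteq G_s$, and here I would invoke the two characterizations already in hand. Since $G$ is Abelian, the hypothesis in Theorem~\ref{thm:mainthm} that some $B$ with $\mu^{n_0}(B) = 1$ commutes with $x$ is automatic: one may simply take $B = G$. Consequently Corollary~\ref{cor:abeliancase} gives $G_s = \{x \in G : \exists n \geq 1,\ \mu^n \wedge \theta_x^{-1}\mu^n \neq \zero\}$. On the other hand, Theorem~\ref{thm:possibleexactcouplingmainthm} gives, with no restriction on $G$, the identity $G_p = \{x \in G : \exists n \geq 1,\ \mu^n \wedge \theta_x^{-1}\mu^n \neq \zero\}$. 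The two sets are described by literally the same condition, so $G_p = G_s$.

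The only place any real work is hidden — and hence the conceptual ``main obstacle,'' though it has already been discharged — is the fact that in the Abelian case the overlap condition $\mu^n \wedge \theta_x^{-1}\mu^n \neq \zero$ is simultaneously \emph{necessary} for successful exact coupling (the first half of Theorem~\ref{thm:mainthm}), necessary and sufficient for \emph{possible} exact coupling (Theorem~\ref{thm:possibleexactcouplingmainthm}), and also \emph{sufficient} for successful exact coupling. That last sufficiency is precisely what the converse direction of the Main Theorem supplies once commutativity comes for free. Thus nothing genuinely new remains to be proved here: the difficulty was absorbed into the Main Theorem, and the corollary reduces to noting that the Abelian characterizations of $G_s$ and $G_p$ coincide.
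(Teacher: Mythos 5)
Your proposal is correct and matches the paper's own proof: both identify $G_s$ via Corollary~\ref{cor:abeliancase} and $G_p$ via Theorem~\ref{thm:possibleexactcouplingmainthm} with the same set $\{x \in G : \exists n \geq 1,\ \mu^n \wedge \theta_x^{-1}\mu^n \neq \zero\}$. The extra remarks about the trivial inclusion $G_s \subseteq G_p$ and where the real work was done are accurate but not needed.
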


\begin{proof}
    By Theorems~\ref{thm:possibleexactcouplingmainthm}
	and~\ref{cor:abeliancase}, both equal
	$\{ x \in G: \exists n \geq 1, \mu^{n}\wedge
	\theta_x^{-1}\mu^{n} \neq \zero\}$.
    \qed{}
\end{proof}

Note that the previous corollary says that if an exact coupling with coupling
time $T$ satisfying
$\P(T<\infty)>0$ exists, then an exact coupling with coupling time $T'$ with
$\P(T'<\infty)=1$ exists.
It does not show that if $\P(T<\infty)>0$ then $\P(T<\infty)=1$.

\begin{corollary}
    For a probability measure $\nu$ on $G$, if $\nu^{n_0} \wedge \theta_x^{-1}\nu^{n_0} \neq \zero$
    for some $n_0 \geq 1$, then $\nu^{n} \wedge \theta_x^{-1}\nu^{n} \neq \zero$ for all $n \geq n_0$.
\end{corollary}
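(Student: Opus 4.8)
The plan is to give a direct convolution argument that avoids any appeal to the coupling construction. Fix $n \geq n_0$ and write $n = n_0 + k$ with $k \geq 0$. The key algebraic fact is that the left-shift $\theta_x^{-1} = \theta_{x^{-1}}$ commutes with convolution on the left factor: for any finite measures $\alpha, \beta$ on $G$ one has $\theta_x^{-1}(\alpha * \beta) = (\theta_x^{-1}\alpha) * \beta$. This follows from a routine change of variables, since if $Y \sim \alpha$ and $Z \sim \beta$ are independent, then $\theta_x^{-1}(\alpha*\beta)$ and $(\theta_x^{-1}\alpha)*\beta$ are both the law of $x^{-1}YZ$. Applying this with $\alpha := \nu^{n_0}$ and $\beta := \nu^{k}$, and using that $\nu^{n} = \nu^{n_0} * \nu^{k}$, yields $\theta_x^{-1}\nu^{n} = (\theta_x^{-1}\nu^{n_0}) * \nu^{k}$.

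Next I would exploit monotonicity of convolution. Set $\eta := \nu^{n_0} \wedge \theta_x^{-1}\nu^{n_0}$, which is nonzero by hypothesis. Since $\eta \leq \nu^{n_0}$ and $\eta \leq \theta_x^{-1}\nu^{n_0}$, and since $\gamma \mapsto \gamma * \nu^{k}$ is monotone on nonnegative measures, one obtains both $\eta * \nu^{k} \leq \nu^{n_0} * \nu^{k} = \nu^{n}$ and $\eta * \nu^{k} \leq (\theta_x^{-1}\nu^{n_0}) * \nu^{k} = \theta_x^{-1}\nu^{n}$. Hence $\eta * \nu^{k} \leq \nu^{n} \wedge \theta_x^{-1}\nu^{n}$. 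Because $\nu^{k}$ is a probability measure, the total mass of $\eta * \nu^{k}$ equals $\eta(G) > 0$, so $\eta * \nu^{k} \neq \zero$, and therefore $\nu^{n} \wedge \theta_x^{-1}\nu^{n} \neq \zero$, as claimed.

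Alternatively --- and more in keeping with the rest of the section --- one could argue through Theorem~\ref{thm:possibleexactcouplingmainthm}. The hypothesis provides a possible exact coupling $(S, S^x, T)$ of $\RW(e,\nu)$ and $\RW(x,\nu)$ with $\P(T = n_0) > 0$. Since $S_m = S^x_m$ for all $m \geq T$, on the event $\{T = n_0\}$ one has $S_n = S^x_n$ for every $n \geq n_0$, so $\P(S_n = S^x_n) > 0$. The chain of inequalities from the first half of the proof of Theorem~\ref{thm:mainthm} then gives $\zero \neq \P(S_n = S^x_n \in \cdot) \leq \nu^{n} \wedge \theta_x\nu^{n}$, and applying $\theta_x^{-1}$ completes the argument.

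I expect no serious obstacle here: the only point requiring care is the shift--convolution commutation identity, which must be stated with the shift acting on the correct (left) factor so that it remains valid in the non-Abelian setting; this is a short change-of-variables check rather than a genuine difficulty. I would present the direct convolution argument as the main proof, since it is self-contained and transparently uniform in $n$, and merely remark on the coupling-theoretic alternative.
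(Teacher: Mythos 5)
Your proposal is correct, and your main argument is genuinely different from the paper's. The paper proves this as a one-line consequence of its coupling machinery: by Theorem~\ref{thm:possibleexactcouplingmainthm} the hypothesis yields a possible exact coupling $(S,S^x,T)$ with $\P(T=n_0)>0$; then $T':=T+(n-n_0)$ is again a coupling time with $\P(T'=n)>0$, and the same theorem applied in the other direction gives $\nu^n\wedge\theta_x^{-1}\nu^n\neq\zero$. Your second, alternative argument is essentially this same proof (observing that $S_n=S^x_n$ on $\{T=n_0\}$ is just the statement that $T'$ is a coupling time), so you have in effect reconstructed the paper's route as your remark. Your primary argument --- the identity $\theta_x^{-1}(\alpha*\beta)=(\theta_x^{-1}\alpha)*\beta$ combined with monotonicity of convolution, giving $\left(\nu^{n_0}\wedge\theta_x^{-1}\nu^{n_0}\right)*\nu^{k}\leq\nu^{n}\wedge\theta_x^{-1}\nu^{n}$ --- is a correct, self-contained, purely measure-theoretic proof that bypasses the coupling theorems entirely; you are right to insist that the shift act on the left factor, which is exactly what makes the identity valid for non-Abelian $G$ under the paper's convention $S^x_n=xX_1\dotsm X_n$. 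What your route buys is independence from the probabilistic machinery and a slightly stronger quantitative conclusion, namely that the overlap mass $\TV{\nu^{n}\wedge\theta_x^{-1}\nu^{n}}$ is non-decreasing in $n$; what the paper's route buys is brevity and thematic fit, since the corollary is presented there precisely as an illustration of what the possible-exact-coupling characterization yields for free.
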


\begin{proof}
    Let $n_0$ as above and let $n\geq n_0$ be given. 
	By Theorem~\ref{thm:possibleexactcouplingmainthm},
	choose a possible exact coupling $(S, S^x, T)$ of $\RW(e,\nu)$ and
	$\RW(x,\nu)$ with $\P(T=n_0)>0$.
    Then $T'  := T+(n-n_0)$ is also a coupling time for $S$ and  $S^x$ with $\P(T' = n)>0$,
    so by Theorem~\ref{thm:possibleexactcouplingmainthm} it holds that $\nu^{n}\wedge \theta_x^{-1}\nu^{n} \neq \zero$.
    \qed{}
\end{proof}

In the Abelian case, admitting a possible exact coupling and admitting a successful
exact coupling turned out to be the same.
Lastly, it is shown that in the non-Abelian case this is not necessarily the case.

\begin{example}
    Let $G:=\mathbb{F}_2$ be the free group on two letters $a,b$ and consider $S$ and $S^{ab}$ simple random walks on $G$.
    That is, the step-length distribution $\mu$ is supported on four atoms:
    \begin{equation}
        \mu(\{a\})=\mu(\{a^{-1}\}) = \mu(\{b\}) = \mu(\{b^{-1}\}) = \frac14.
    \end{equation}
    Suppose $S$ starts at the empty word $e$, and $S^{ab}$ starts at $ab$.
    If $S$ and $S^{ab}$ are taken to be independent, then with positive probability
    $S_1=a = S^{ab}_1$, so a possible exact coupling can be easily constructed.
    Furthermore, note that the length $\len S$ of $S$ is itself a Markov chain on $\N$.
    In fact, with $W$ denoting a simple random walk on $\Z$ having
    probability $1/4$ of decreasing and $3/4$ of increasing at each step, and which,
    for any $x \in \Z$, is started at $x$ under a measure $\P_x$, one has
    \begin{equation}
        \P(\len S \text{ returns to } 0) = \P_1(W \text{ hits } 0) <1,
    \end{equation}
    where the last inequality is a standard fact about asymmetric simple random walks on $\Z$.
    It follows that $0$ is a transient state for the Markov chain $\len S$ and, since the chain
    is irreducible, all states are transient.
    Hence a.s.\ the length of $S$ tends to $\infty$ and a limiting word is finalized.
    A similar statement holds for $S^{ab}$.
    Denote the limiting words $\lim S$ and $\lim S^{ab}$.
	Admitting a successful exact coupling is also equivalent, 
	cf.\ Theorem 9.4 in Section 9.5 of \cite{thorisson2000coupling},
	to
	\begin{align}\label{eq:tailmeasurableequality}
		\P(S \in B) = \P(S^{ab} \in B),\qquad B \in \T,
	\end{align}
	where $\T$ is the $\sigma$-algebra of tail measurable events.
    The set 
    \begin{equation*} 
        \{s=\{s_n\}_{n=0}^\infty : \lim s \text{ starts with $b$}\}
    \end{equation*}
    is tail measurable.
    With $\tau$ the hitting time of $e$ for $S^{ab}$, 
    by the strong Markov property and the fact that at time $\tau$ it holds that $S^{ab}$ starts anew as a copy of $S$,
    \begin{align*}
        \P(\lim S^{ab} \text{ starts with $b$})
        &= \P(\tau<\infty)\P(\lim S \text{ starts with $b$})\\
        &= \P_2(W \text{ hits } 0)\P(\lim S \text{ starts with $b$})\\
        &< \P(\lim S \text{ starts with $b$}).
    \end{align*}
    Thus there is no successful exact coupling between $S$ and $S^{ab}$.
\end{example}

\bibliographystyle{spmpsci}
\nocite{*}
\bibliography{exact-couplings.bbl}

\end{document}